 \newtheorem{thm}{Theorem}[section]
 \newtheorem{cor}[thm]{Corollary}
 \newtheorem{prop}[thm]{Proposition}
 \theoremstyle{definition}
 \theoremstyle{remark}
 \newtheorem{rem}[thm]{Remark}
 \numberwithin{equation}{section}
\newcommand{\cC}{\mathcal{C}}
\newcommand{\cM}{\mathcal{M}}
\newcommand{\abs}[1]{\ensuremath{|#1|}}
\newcommand{\Abs}[1]{\ensuremath{\left|#1\right|}}
\newcommand{\Norm}[2]{\ensuremath{\|#1\|_{#2}}}
\renewcommand{\epsilon}{\varepsilon}
\renewcommand{\phi}{\varphi}
\begin{document}
\sloppy

\title[$H^{\infty}$ interpolation and embedding theorems for rational functions]{$H^{\infty}$ interpolation and embedding theorems \\
 for rational functions}

\author{Anton Baranov}

\address{Department of Mathematics and Mechanics, Saint Petersburg State University,
28, Universitetskii pr., St. Petersburg, 198504, Russia}

\email{anton.d.baranov@gmail.com}

\thanks{The work is supported by Russian Science Foundation grant 14-41-00010.}

\author{Rachid Zarouf}

\address{Aix-Marseille Universit\'{e}, Laboratoire Apprentissage, Didactique, Evaluation,
Formation, 32 Rue Eug\`{e}ne Cas CS 90279 13248 Marseille Cedex 04, France \\
\phantom{r} and \\
Department of Mathematics and Mechanics, Saint Petersburg State University,
28, Universitetskii pr., St. Petersburg, 198504, Russia}

\email{rachid.zarouf@univ-amu.fr}

\subjclass{Primary 15A60, 32A36, 26A33; Secondary 30D55,
26C15, 41A10}

\keywords{$H^{\infty}$ interpolation, Blaschke product, Model space, Rational function, Hardy spaces, Weighted Bergman spaces}

\date{February 25, 2019}
\begin{abstract}
We consider a Nevanlinna--Pick interpolation problem on finite sequences
of the unit disc $\mathbb{D}$ constrained by Hardy and radial-weighted
Bergman norms. We find sharp asymptotics on the corresponding interpolation
constants. As another application of our techniques we prove embedding
theorems for rational functions. We find that the embedding of $H^{\infty}$
into Hardy or radial-weighted Bergman spaces in $\mathbb{D}$ is invertible
on the subset of rational functions of a given degree $n$ whose poles
are separated from the unit circle and obtain asymptotically sharp
estimates of the corresponding embedding constants. 
\end{abstract}

\maketitle

\bigskip{}

\section{\label{sec:Statement-of-the}Introduction }

We denote by $\mathbb{D}=\{z\in\mathbb{C}:\,\vert z\vert<1\}$ the
unit disc and by $\mathcal{H}ol\left(\mathbb{D}\right)$ the space
of holomorphic functions in $\mathbb{D}$. We consider the following
Banach spaces $X\subset\mathcal{H}ol\left(\mathbb{D}\right)$: 
\begin{enumerate}
\item the Hardy spaces $X=H^{p}=H^{p}(\mathbb{D}),\:1\leq p\le\infty$;
we refer to \cite{Dur} for the corresponding definition and their
general properties; 
\item the radial-weighted Bergman spaces $X=A^{p}\big((1-\left|z\right|^{2})^{\beta}{\rm d}\mathcal{A}\big)=A^{p}\left(\beta\right)$, $1\leq p<\infty$, $\beta>-1$: 
\[
X=\Big\{ f\in\mathcal{H}ol(\mathbb{D})\,:\;\Norm{f}{A^{p}\left(\beta\right)}^{p}=\int_{\mathbb{D}}\left|f(z)\right|^{p}(1-\left|z\right|^{2})^{\beta}{\rm d}\mathcal{A}(z)<\infty\Big\},
\]
where $\mathcal{A}$ is the normalized area measure on $\mathbb{D}$.
We refer to \cite{HKZ} for general properties of $A^{p}\left(\beta\right)$.
For $\beta=0$ we shorten the notation to $X=A^{p}$. 
\end{enumerate}

\subsection{\label{sub:Problem-1.-Effective} Effective $H^{\infty}$ interpolation }

We consider the following problem: given a Banach space $X\subset\mathcal{H}ol\left(\mathbb{D}\right)$
and a finite sequence $\sigma$ in $\mathbb{D}$, what is the best
possible interpolation of the traces $f\vert_{\sigma}$, $f\in X$,
by functions from the space $H^{\infty}$? The case $X\subset H^{\infty}$
is of no interest (such a situation implies the uniform
boundedness of the interpolation quantity $c(\sigma,\,X,\,H^{\infty})$
below), and so one can suppose that either $H^{\infty}\subset X$
or $X$ and $H^{\infty}$ are incomparable. More precisely, our problem
is to compute or estimate the following interpolation quantity 
\[
c(\sigma,\,X,\,H^{\infty})={\displaystyle \sup_{f\in X,\,\Norm{f}{X}\leq1}}\mbox{inf}\big\{\Norm{g}{\infty}:\:g\in H^{\infty},\:g\vert_{\sigma}=f\vert_{\sigma}\big\}.
\]

It is discussed in \cite{Zar2} that the classical interpolation problems,
those of Nevanlinna--Pick and Carath\'eodory--Schur (see \cite[p. 231]{Nik1})
on one hand and Carleson's free interpolation (see \cite[p. 158]{Nik2})
on the other hand, are of this nature. For general Banach spaces $X$
containing $H^{\infty}$ as a dense subset,
$c\left(\sigma,\,X,\,H^{\infty}\right)$ is expressed as 
\[
c(\sigma,\,X,\,H^{\infty})={\displaystyle \sup_{f\in X\cap H^{\infty},\,\Norm{f}{X}\leq1}}\Norm{f}{H^{\infty}/B_{\sigma}H^{\infty}},
\]
where $B_{\sigma}$ is the finite Blaschke product 
\[
B_{\sigma}=\prod_{\lambda\in\sigma}b_{\lambda},\qquad b_{\lambda}=\frac{\lambda-z}{1-\overline{\lambda}z},
\]
$b_{\lambda}$ being the elementary Blaschke factor associated to
a $\lambda\in\mathbb{D}$. We denote by $\sigma_{n,\,\lambda}=(\lambda,\,...,\,\lambda)\in\mathbb{D}^{n}$
the one-point sequence of multiplicity $n$ corresponding to a given
$\lambda\in\mathbb{D}$.

It is a natural problem (related, e.g., to matrix analysis) to study
the asymptotic behaviour of $c\left(\sigma,\,X,\,H^{\infty}\right)$
when the set $\sigma$ approaches the boundary and its cardinality
tends to infinity. We put 
\[
C_{n,\,r}(X,\,H^{\infty})=\sup\,\big\{ c\left(\sigma,\,X,\,H^{\infty}\right):\:\sigma\in\mathbb{D}^{n},\:\:\max_{\lambda\in\sigma}\left|\lambda\right|\leq r\big\}.
\]
Initially motivated by a question posed in an applied context in \cite{Bar,BaWi},
asymptotically sharp estimates of $C_{n,\,r}(X,\,H^{\infty})$ were
derived in \cite{Zar2} for the cases $X=H^{p}$, $p\in2\mathbb{N}$,
and $X=A^{2}$. 
\begin{thm}
\textup{\cite{Zar2}} Given $n\geq1$, $r\in[0,\,1)$, $p\in2\mathbb{N}$
and $\lambda\in\mathbb{D}$ with $\left|\lambda\right|\leq r$, we
have 
\begin{equation}
a_{p}\left(\frac{n}{1-\left|\lambda\right|}\right)^{\frac{1}{p}}\leq c(\sigma_{n,\,\lambda},\,H^{p},H^{\infty})\leq C_{n,\,r}(H^{p},\,H^{\infty})\leq b_{p}
\left(\frac{n}{1-r}\right)^{\frac{1}{p}},\label{eq:Hardy_interp}
\end{equation}
\begin{equation}
a\cdot\frac{n}{1-\left|\lambda\right|}\leq c(\sigma_{n,\,\lambda},\,A^{2},\,H^{\infty})\leq C_{n,\,r}(A^{2},\,H^{\infty})\leq b\cdot\frac{n}{1-r},\label{eq:Bergm_interp}
\end{equation}
where $a_{p},b_{p}$ are constants depending only on $p$ and $a,b$
are some absolute constants. 
\end{thm}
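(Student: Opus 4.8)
\medskip

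My plan is to reduce both sides, via Sarason's commutant lifting theorem, to an operator-norm computation on the model space, and then to treat the lower bound (which concerns the single sequence $\sigma_{n,\lambda}$) and the upper bound (which is a supremum over all admissible sequences) by genuinely different arguments. The middle inequality $c(\sigma_{n,\lambda},X,H^\infty)\le C_{n,r}(X,H^\infty)$ is immediate from the definition of $C_{n,r}$: for $|\lambda|\le r$ the constant sequence $\sigma_{n,\lambda}$ is admissible in the defining supremum. Writing $K_{B_\sigma}=H^2\ominus B_\sigma H^2$ for the model space (of dimension $n$) and $A_f=P_{K_{B_\sigma}}M_f\big|_{K_{B_\sigma}}$ for the compression of multiplication by $f$, Sarason's theorem gives
\[
\|f\|_{H^\infty/B_\sigma H^\infty}=\|A_f\|=\sup\Big\{\Big|\int_{\mathbb{T}}fu\overline v\,dm\Big|:\ u,v\in K_{B_\sigma},\ \|u\|_{2}=\|v\|_{2}=1\Big\},
\]
so that $c(\sigma,X,H^\infty)=\sup_{\|f\|_X\le1}\|A_f\|$. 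Everything then reduces to estimating the bilinear pairing $\int_{\mathbb{T}}fu\overline v\,dm$ uniformly in $u,v\in K_{B_\sigma}$.

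For the upper bound on $C_{n,r}(H^p,H^\infty)$ I would apply H\"older's inequality to get $\big|\int_{\mathbb{T}}fu\overline v\,dm\big|\le\|f\|_{H^p}\,\|u\overline v\|_{L^{p'}(\mathbb{T})}$ with $\tfrac1p+\tfrac1{p'}=1$, and thereby reduce the whole upper bound to the uniform embedding estimate
\[
\sup_{u,v\in K_{B_\sigma},\,\|u\|_2=\|v\|_2=1}\|u\overline v\|_{L^{p'}(\mathbb{T})}\le b_p\Big(\frac{n}{1-r}\Big)^{1/p},
\]
required to hold for every finite Blaschke product $B_\sigma$ of degree $n$ with zeros in $\{|z|\le r\}$. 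This is where the hypothesis $p\in2\mathbb{N}$ is decisive: writing $p=2m$ makes $p'=\tfrac{2m}{2m-1}$ close to $1$, so that products $u\overline v$ of model-space functions can be treated in a near-Hilbertian regime using the explicit reproducing-kernel structure of $K_{B_\sigma}$ (for the single point $\lambda$ this is the orthonormal basis $b_\lambda^k T_\lambda$, $k=0,\dots,n-1$, where $T_\lambda=(1-|\lambda|^2)^{1/2}(1-\overline\lambda z)^{-1}$, and its analogue for general sequences). A single normalized kernel already accounts for the boundary factor, since $\|T_\lambda\|_{L^{2p'}}^2\asymp(1-|\lambda|)^{-1/p}$.

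The crux — and the precise reason a one-point truncation cannot yield $C_{n,r}$ — is to obtain the sharp power $n^{1/p}$, rather than $n$, in the displayed embedding estimate, uniformly over all admissible $\sigma$. A crude triangle-inequality and Cauchy--Schwarz argument over the $n$ basis vectors of $K_{B_\sigma}$ loses a factor and produces only the bound $n\,(1-r)^{-1/p}$; extracting the correct exponent $n^{1/p}$ demands the sharp $L^{p'}$--$L^{2}$ (Carleson-type) embedding inequalities for the $n$-dimensional model space $K_{B_\sigma}$, which is exactly the circle of embedding ideas the paper develops. I expect this uniform embedding estimate to be the principal technical obstacle, the remaining steps being comparatively soft. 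For the matching lower bound on $c(\sigma_{n,\lambda},H^p,H^\infty)$ it suffices to exhibit one test function: I would take a normalized power of the Cauchy kernel concentrated at $\lambda$, say $f_\lambda(z)=c_{n,\lambda}(1-\overline\lambda z)^{-s}$ with $s$ and $c_{n,\lambda}$ chosen so that $\|f_\lambda\|_{H^p}\le1$, and bound $\|A_{f_\lambda}\|$ from below by testing the bilinear form on suitable basis elements $b_\lambda^j T_\lambda$; the high-order contact of $f_\lambda$ with its Taylor data at $\lambda$ forces any bounded interpolant to be large, yielding $a_p(n/(1-|\lambda|))^{1/p}$, with $n$ entering through $\dim K_{b_\lambda^n}$ and $(1-|\lambda|)^{-1/p}$ through the $H^p$-size of the kernel.

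Finally, the Bergman case $X=A^2$ follows the same scheme with the boundary pairing replaced by the area pairing: $c(\sigma,A^2,H^\infty)=\sup_{\|f\|_{A^2}\le1}\|A_f\|$ is again estimated through an embedding of $K_{B_\sigma}$, now of Bergman--Carleson type. The stronger, linear rate $n/(1-r)$ reflects the higher-order singularity of the Bergman kernel $(1-\overline\lambda z)^{-2}$ relative to the Hardy kernel, while the matching lower bound is produced by the Bergman reproducing kernel at $\lambda$ in place of $f_\lambda$.
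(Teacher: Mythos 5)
Your upper-bound scheme is viable and in fact lands in the same circle of ideas as the paper. The identity $\|f\|_{H^{\infty}/B_{\sigma}H^{\infty}}=\|A_{f}\|=\sup\big\{\big|\int_{\mathbb{T}}fu\overline{v}\,{\rm d}m\big|\big\}$ is correct, and the embedding you isolate,
$\|u\overline{v}\|_{L^{p'}(\mathbb{T})}\lesssim\left(\frac{n}{1-r}\right)^{1/p}$ for $L^{2}$-normalized $u,v\in K_{B_{\sigma}}$, is true: since $K_{B_\sigma}\subset\mathcal{R}_{n,r}$, it follows from $\|u\overline{v}\|_{p'}\le\|u\|_{2p'}\|v\|_{2p'}$ combined with the Nikolskii-type inequality \eqref{eq:BaZaNik}, which gives $\|u\|_{H^{2p'}}\lesssim\left(\frac{n}{1-r}\right)^{\frac12-\frac{1}{2p'}}\|u\|_{H^{2}}$. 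The paper argues slightly differently but with the same engine: it evaluates the projection pointwise, $|P_{B}f(\zeta)|=|\langle f,k_{\zeta}^{B}\rangle|\le\|f\|_{H^{p}}\|k_{\zeta}^{B}\|_{H^{q}}$, and estimates the kernel norms in Proposition \ref{lem:rep_kern_norms_est} (whose proof for $q\in(1,2)$ itself invokes \eqref{eq:BaZaNik}). Two caveats: you leave this key estimate unproven, and your claim that $p\in2\mathbb{N}$ is ``decisive'' for the upper bound is off --- this route works for every $p\in[1,\infty)$; the evenness hypothesis in the quoted theorem comes from the lower bound of \cite{Zar2}, and the present paper removes it. Also, for $X=A^{2}$ the passage from the boundary pairing to an area pairing is not automatic, since $f\in A^{2}$ has no boundary values: the paper needs the Green formula \eqref{gr} together with Bergman-norm estimates of $(k_{\zeta}^{B})'$.

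The genuine gap is your lower bound: a normalized power of the Cauchy kernel cannot detect the dimension $n$. If $f_{\lambda}=c_{n,\lambda}(1-\overline{\lambda}z)^{-s}$ with $sp>1$ and $\|f_{\lambda}\|_{H^{p}}=1$, then $\|f_{\lambda}\|_{\infty}\asymp_{s}(1-|\lambda|)^{-1/p}$, and since trivially $\|f_{\lambda}\|_{H^{\infty}/b_{\lambda}^{n}H^{\infty}}\le\|f_{\lambda}\|_{\infty}$ (take the zero multiple of $b_{\lambda}^{n}$), no amount of testing the bilinear form on elements $b_{\lambda}^{j}T_{\lambda}$ can produce more than $(1-|\lambda|)^{-1/p}$: the factor $n^{1/p}$ is irretrievably lost, and $n$ cannot ``enter through $\dim K_{b_{\lambda}^{n}}$'' when the test function itself has small sup norm. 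Letting $s$ grow with $n$ does not rescue this: one computes $\|f_{\lambda}\|_{\infty}/\|f_{\lambda}\|_{H^{p}}\asymp(sp)^{1/(2p)}(1-|\lambda|)^{-1/p}$, so even the unreachable ceiling is of order $n^{1/(2p)}$, still short of $n^{1/p}$. The same objection kills your $A^{2}$ lower bound: the normalized Bergman kernel yields only $(1-|\lambda|)^{-1}$, missing the factor $n$ in \eqref{eq:Bergm_interp}. What is needed is a test function $\phi$ satisfying two properties simultaneously: $\|\phi\|_{\infty}/\|\phi\|_{X}$ is of the claimed order, \emph{and} $\|\phi\|_{H^{\infty}/b_{\lambda}^{n}H^{\infty}}\gtrsim\|\phi\|_{\infty}$. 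The paper achieves both with the transplanted squared Dirichlet kernel $Q_{n}=\frac{1-r^{2}}{(1+rz)^{2}}D_{n}^{2}(b_{-r})$ (and $Q_{n}^{2}$ for $A^{2}$), for which $\|Q_{n}\|_{\infty}=n^{2}\frac{1+r}{1-r}$ and $\|Q_{n}\|_{H^{p}}\le n^{2-\frac1p}\left(\frac{1+r}{1-r}\right)^{1-\frac1p}$, and --- crucially exploiting that $\Psi_{n}=Q_{n}\circ b_{-r}$ has nonnegative Taylor coefficients --- a Fej\'er-kernel/Ces\`aro-mean argument gives $\|\Psi_{n}\|_{H^{\infty}/z^{n}H^{\infty}}\ge(\Psi_{n}*F_{n})(1)\gtrsim n^{2}/(1-r)$, i.e., the quotient norm is comparable to the sup norm. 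This quotient-norm lower bound, and a test function capable of supporting it, is exactly the mechanism missing from your proposal.
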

\begin{rem}
The right-hand side inequality in \eqref{eq:Hardy_interp} is established
in \cite{Zar2} for any $p\in[1,\,\infty)$. The proof makes use of
a deep interpolation result between Hardy spaces by P. Jones \cite{Jon},
which we avoid in the present paper. 
\end{rem}
From now on, for two positive functions $a$ and $b$, we say that
$a$ is dominated by $b$, denoted by $a\lesssim b$, if there is
a constant $c>0$ such that $a\leq cb;$ and we say that $a$ and
$b$ are comparable, denoted by $a\asymp b$, if both $a\lesssim b$
and $b\lesssim a$.

The following conjecture for general Banach spaces $X$ (of analytic
functions of moderate growth in $\mathbb{D}$) was formulated in \cite{Zar2}:
\begin{equation}
C_{n,\,r}\left(X,\,H^{\infty}\right)\asymp\varphi_{X}\left(1-\frac{1-r}{n}\right),\label{eq:Conjecture}
\end{equation}
where $\varphi_{X}(t)$ stands for the norm of the evaluation functional
$f\mapsto f(t)$ on the space $X$. One of the main results of \cite{Zar3}
verifies the conjecture \eqref{eq:Conjecture} for the case $X=A^{2}(\beta)$,
$\beta\in\mathbb{Z}_{+}$. More recently an upper bound on $c(\sigma_{n,\,\lambda},\,A^{p}(\beta),\,H^{\infty})$
with $1\leq p\leq2$, and $\beta>-1$ was derived in \cite{Zar1}.
\medskip{}

In this paper we \smallskip{}

\begin{enumerate}
\item strengthen \eqref{eq:Hardy_interp} by proving the left-hand side
inequality for any $p\in[1,+\infty)$ and by providing a simple and
direct proof of the right-hand side one; \smallskip{}
\item prove conjecture \eqref{eq:Conjecture} for all radial-weighted Bergman
spaces $X=A^{p}(\beta)$ (see Theorem \ref{thm:Interp} below); \smallskip{}
\item apply Theorem \ref{thm:Interp} to spectral estimates
on norms of functions of matrices (see Subsection \ref{sub:motivation}
for details and Corollary \ref{cor:matrix} for the corresponding
statement); \smallskip{}
\item show that the embedding of $H^{\infty}$ into $A^{p}(\beta)$ is invertible
on the subset of rational functions of a given degree $n$ whose poles
are separated from the unit circle and obtain an asymptotically sharp
estimate for the embedding constant (see Subsection \ref{subsec:Problem-2.-Embedding}
for details and Theorem \ref{thm:Embedd} for the corresponding statement). 
\end{enumerate}
\medskip{}


\subsection{\label{sub:motivation} Motivations from matrix analysis }

Let $\cM_{n}$ be the set of complex $n\times n$ matrices and let
$\Norm{T}{}$ denote the operator norm of $T\in\cM_{n}$ associated
with the Hilbert norm on $\mathbb{C}^{n}$. We denote by $\sigma=\sigma(T)$
the spectrum of $T$, by $m_{T}$ its minimal polynomial, and by $\abs{m_{T}}$
the degree of $m_{T}$. In our discussion we will assume that $\Norm{T}{}\leq1$
and call such $T$ a \textit{contraction}. Let $\cC_{n}\subset\cM_{n}$
denote the set of all contractions. For a finite sequence $\sigma$
in $\mathbb{D}$, we denote by $P_{\sigma}$ the monic polynomial
with zero set $\sigma$ (counted with multiplicities). For a finite
sequence $\sigma$ in $\mathbb{D}$ and $f\in\mathcal{H}ol(\mathbb{D}),$
V. Pt\'ak and N. Young \cite{PtYo} introduced the quantity 
\[
\mathcal{M}(f,\,\sigma)=\mbox{sup}\left\{ \Norm{f(T)}{}:\,T\in\cC_{n},\,m_{T}=P_{\sigma}\right\} .
\]
Note that interesting cases occur for $f$ such that: \smallskip{}

\begin{enumerate}
\item $f\vert_{\sigma}={\displaystyle z^{k}\vert_{\sigma}}$ (estimates
on the norm of the powers of an $n\times n$ matrix, see for example
\cite{Pta}); \smallskip{}
\item $f\vert_{\sigma}=z^{-1}\vert_{\sigma}$ (estimates on condition numbers
and the norm of inverses of $n\times n$ matrices, see \cite{Nik0});
\smallskip{}
\item $f\vert_{\sigma}=(\zeta-z)^{-1}\vert_{\sigma}$ (estimates on the
norm of the resolvent of an $n\times n$ matrix, see for example \cite{Nik0,SzZa}). 
\end{enumerate}
\smallskip{}

Given a Blaschke sequence $\sigma$ in $\mathbb{D}$ and $f\in H^{\infty}$
it is possible to evaluate $\mathcal{M}(f,\,\sigma)$ as follows:
\begin{equation}
\mathcal{M}(f,\,\sigma)=\Norm{f}{H^{\infty}/B_{\sigma}H^{\infty}}=\Norm{f(M_{B_{\sigma}})}{},\label{eq:Sarason}
\end{equation}
where $M_{B_{\sigma}}$ is the compression of the multiplication operation
by $z$ to the model space $K_{B_{\sigma}}$, see Subsection \ref{sub:Model-spaces-and}
for the definitions. This formula is due to N. K. Nikolski \cite[Theorem 3.4]{Nik0}
while the last equality is a well-known corollary of Commutant Lifting
Theorem of B. Sz.-Nagy and C. Foia\c{s} \cite{NaFo,FoFr,Sar}.

Let $X\subset\mathcal{H}ol(\mathbb{D})$ be a Banach space containing
$H^{\infty}$. 
The above equality on $\mathcal{M}(f,\,\sigma)$ naturally extends
to any $f\in X$ as follows. There exists an analytic polynomial $p$
interpolating $f$ on the finite set $\sigma$. Therefore for any
$T\in\cC_{n}$ with $m_{T}=P_{\sigma}$ and $\sigma\subset\mathbb{D}$,
we have $f(T)=p(T)$ (since $f=p+m_{T}h$ for some $h\in\mathcal{H}ol(\mathbb{D})$).
Hence, 
\begin{eqnarray*}
\mathcal{M}(f,\,\sigma) & = & \mathcal{M}(p,\,\sigma)=\Norm{p}{H^{\infty}/B_{\sigma}H^{\infty}}\\
 & = & \Norm{p(M_{B_{\sigma}})}{}=\Norm{f(M_{B_{\sigma}})}{}.
\end{eqnarray*}
Here we used \eqref{eq:Sarason} applied to $p$. Moreover 
\begin{align*}
\Norm{p}{H^{\infty}/B_{\sigma}H^{\infty}} & =\inf\{\Norm{p+B_{\sigma}h}{\infty}:\:h\in H^{\infty}\}\\
 & =\inf\{\Norm{g}{\infty}:\:\:g\vert_{\sigma}=p\vert_{\sigma},\:g\in H^{\infty}\}\\
 & =\inf\{\Norm{g}{\infty}:\:\:g\vert_{\sigma}=f\vert_{\sigma},\:g\in H^{\infty}\}.
\end{align*}
We conclude that 
\[
\mathcal{M}(f,\,\sigma)=\inf\,\big\{\Norm{g}{\infty}:\:g\in H^{\infty},\:g\vert_{\sigma}=f\vert_{\sigma}\big\}.
\]
Therefore, given a division-closed Banach space $X\subset\mathcal{H}ol\left(\mathbb{D}\right)$
containing $H^{\infty}$ and a finite sequence $\sigma$ in $\mathbb{D}$,
it turns out that 
\[
\sup_{\Norm{f}{X}\leq1}\mathcal{M}(f,\,\sigma)=c\left(\sigma,\,X,\,H^{\infty}\right)
\]
and so, for all $n\geq1,$ $r\in(0,\,1)$, 
\[
\sup_{\Norm{f}{X}\leq1}\big\{\mathcal{M}(f,\,\sigma):\:\sigma\in\mathbb{D}^{n},\:\max_{\lambda\in\sigma}\left|\lambda\right|\leq r\big\}=\mathcal{C}_{n,\,r}(X,\,H^{\infty}).
\]
\smallskip{}


\subsection{\label{subsec:Problem-2.-Embedding} Embedding theorems for rational
functions}

In \cite{Dyn1,Dyn2} the following phenomenon was discovered: sharp
embedding theorems are invertible on the set of rational functions
of a given degree. Let $n\geq1$, let $\mathcal{P}_{n}$ be the space
of complex analytic polynomials of degree less or equal than $n$
and let 
\[
\mathcal{R}_{n}=\{P/Q:\;P,\,Q\in\mathcal{P}_{n},\;Q(\zeta)\ne0\text{\:\ for}\:\abs{\zeta}\leq1\}
\]
be the set of rational functions of degree at most $n$ with poles
outside of the closed unit disc $\overline{\mathbb{D}}=\{z\in\mathbb{C}:\,\vert z\vert\leq1\}$.
Recall that the Hardy--Littlewood embedding theorem \cite[Theorem 1.1]{Dur}
says that $H^{q}\subset A^{p}(\beta)$ for any $p>1$, $\beta>-1$
and $q=\frac{p}{2+\beta}$. Given two Banach spaces of analytic functions
in the disc which contain $\mathcal{R}_{n}$, denote by $\mathcal{E}_{n}(X,\,Y)$
the best possible constant such that 
\begin{equation}
\Norm{f}{X}\leq\mathcal{E}_{n}(X,\,Y)\Norm{f}{Y},\qquad f\in\mathcal{R}_{n}.\label{eq:embedd}
\end{equation}
Dyn'kin \cite[Theorem 4.1]{Dyn2} proved that the Hardy--Littlewood
embedding theorem is invertible on $\mathcal{R}_{n}$; namely, 
\[
\mathcal{E}_{n}(H^{q},\,A^{p}(\beta))\asymp n^{\frac{1+\beta}{p}},
\]
when $q=\frac{p}{2+\beta}$.

Note that for many choices of $X$ and $Y$ we have $\mathcal{E}_{n}(X,\,Y)=+\infty$
for every $n\in\mathbb{N}$, since the poles of the rational functions
are allowed to be arbitrarily close to the unit circle $\mathbb{T}=\{z\in\mathbb{C}:\,\vert z\vert=1\}$.
This is for example the case when $X=H^{\infty}$ and $Y=H^{p}$ or
$Y=A^{p}$, $1\leq p<+\infty$ (to see this one can consider the function
$f(z)=(1-rz)^{-1}$ as $r\rightarrow1^{-}$). This observation suggests
to consider a more general problem when one replaces the class $\mathcal{R}_{n}$
in \eqref{eq:embedd} by $\mathcal{R}_{n,\,r}$ (for any fixed $r\in[0,\,1)$)
defined by 
\[
\mathcal{R}_{n,\,r}=\Big\{ P/Q:\;P,\,Q\in\mathcal{P}_{n},\;Q(\zeta)\ne0\;\;{\rm for}\;\;\abs{\zeta}<\frac{1}{r}\Big\},
\]
i.e., by the set of all rational functions of degree at most $n\ge1$
without poles in $\frac{1}{r}\mathbb{D}$. The quantity $\mathcal{E}_{n}(X,\,Y)$
(when it is infinite) is replaced by the best possible constant $\mathcal{E}_{n,\,r}(X,\,Y)$
such that 
\[
\Norm{f}{X}\leq\mathcal{E}_{n,\,r}(X,\,Y)\Norm{f}{Y},\qquad f\in\mathcal{R}_{n,\,r},
\]
and we can study the asymptotic dependence on the parameters $r$
and $n$ as $r\to1^{-}$ and $n\to\infty$. Recently, the authors
\cite[Theorem 2.4]{BaZa1} proved S.\,M.~Nikolskii-type inequalities
for rational functions (whose poles do not belong to $\mathbb{T}$)
which can be formulated here as 
\begin{equation}
\mathcal{E}_{n,\,r}(H^{q},\,H^{p})\asymp\bigg(\frac{n}{1-r}\bigg)^{\frac{1}{p}-\frac{1}{q}},\qquad0\leq p<q\leq\infty.\label{eq:BaZaNik}
\end{equation}
\smallskip{}

\subsection{Outline of the paper}

The paper is organized as follows. Section \ref{sec:Known-results-and}
states our main results. Section \ref{sec:Main-ingredients} is devoted
to the main ingredients and tools employed in the proofs. In particular,
we recall the so-called \textit{theory of model spaces} which plays
a central role here (see Subsection \ref{sub:Model-spaces-and}) and
discuss the strategy of the proofs of main results (Subsection \ref{str}).
Section \ref{est:norms} contains sharp asymptotic estimates for the
norms of derivatives of reproducing kernels in various function spaces.
In Sections \ref{sec:Proofs} and \ref{sec:Proof-of-the} we prove,
respectively, the upper and the lower bounds in Theorems \ref{thm:Interp}
and \ref{thm:Embedd}. \bigskip{}


\section{\label{sec:Known-results-and} Main results}
\begin{thm}
\label{thm:Interp} Let $n\geq1$, $r\in[0,\,1)$, $p\in[1,\,+\infty)$
and $\beta>-1$. Then we have 
\begin{equation}
C_{n,\,r}\left(H^{p},\,H^{\infty}\right)\asymp\left(\frac{n}{1-r}\right)^{\frac{1}{p}}\label{eq:strength_hardy}
\end{equation}
with constants depending only on $p$, and 
\begin{equation}
C_{n,\,r}\left(A^{p}(\beta),\,H^{\infty}\right)\asymp\bigg(\frac{n}{1-r}\bigg)^{\frac{2+\beta}{p}}\label{eq:strength_bergm}
\end{equation}
with constants depending only on $p$ and $\beta$. 
\end{thm}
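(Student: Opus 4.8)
The plan is to prove the two-sided estimates by establishing matching upper and lower bounds, both of which reduce to sharp asymptotics for the norms of reproducing kernels of the model space $K_{B_\sigma}$ measured in the dual of $X$. The guiding principle is the quotient representation $c(\sigma,X,H^\infty)=\sup_{\Norm{f}{X}\le1}\Norm{f}{H^\infty/B_\sigma H^\infty}$ together with the heuristic \eqref{eq:Conjecture}: a direct computation of reproducing-kernel norms gives $\varphi_{H^p}(t)\asymp(1-|t|)^{-1/p}$ and $\varphi_{A^p(\beta)}(t)\asymp(1-|t|)^{-(2+\beta)/p}$, so that the target orders in \eqref{eq:strength_hardy} and \eqref{eq:strength_bergm} are precisely $\varphi_X\big(1-\tfrac{1-r}{n}\big)$, the point $1-\tfrac{1-r}{n}$ being the boundary scale created by $n$ masses near radius $r$.

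For the upper bounds, fix $\sigma$ with $|\sigma|=n$ and $\max_{\lambda\in\sigma}|\lambda|\le r$, and let $f\in X$. Since $\sigma\subset r\overline{\mathbb D}$ and $f$ is holomorphic there, $f$ has a well-defined Hermite interpolant $\mathcal I_\sigma f\in K_{B_\sigma}$ matching $f$ to the prescribed orders on $\sigma$; as $\mathcal I_\sigma f\in K_{B_\sigma}\subset H^\infty$ is itself an admissible interpolant,
\[
\inf\{\Norm{g}{\infty}:g\in H^\infty,\ g|_\sigma=f|_\sigma\}=\Norm{\mathcal I_\sigma f}{H^\infty/B_\sigma H^\infty}\le\Norm{\mathcal I_\sigma f}{\infty}.
\]
Each value $\mathcal I_\sigma f(z)$ is a bounded functional of $f$ on $X$, represented as $\mathcal I_\sigma f(z)=\langle f,\Phi_z\rangle$ against a kernel $\Phi_z$ annihilating $B_\sigma$ in the dual: in the Hardy case $\Phi_z$ is the model-space kernel $k_z^{B_\sigma}$ and the pairing is the boundary $L^2$ one, while in the Bergman case $\Phi_z$ is the rational kernel of degree $\le n$ orthogonal to $B_\sigma A^2(\beta)$ for the weighted pairing. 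By the maximum principle and Hölder duality one obtains $c(\sigma,X,H^\infty)\le\sup_{z\in\mathbb D}\Norm{\Phi_z}{X^*}$, with $X^*$ equal to $H^{p'}$, resp.\ $A^{p'}(\beta)$. It then remains to bound this supremum by $(n/(1-r))^{1/p}$, resp.\ $(n/(1-r))^{(2+\beta)/p}$, uniformly over all admissible $\sigma$, which is exactly the content of the kernel estimates of Section \ref{est:norms}, the one-point sequence being the extremal configuration.

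For the lower bounds it suffices to exhibit one good configuration, and I would take the one-point sequence $\sigma_{n,\lambda}$ with $|\lambda|=r$, so that $B_\sigma=b_\lambda^n$ and $K_{B_\sigma}$ carries the Malmquist--Walsh basis. Passing to the operator model via \eqref{eq:Sarason} and the functional calculus $f(M_{B_\sigma})=P_{B_\sigma}M_f|_{K_{B_\sigma}}$, I would bound the quotient norm from below by $\Norm{P_{B_\sigma}\big((\mathcal I_\sigma f)\,e_0\big)}{H^2}$, where $e_0=\sqrt{1-r^2}\,(1-\bar\lambda z)^{-1}$ is the normalized reproducing kernel at $\lambda$. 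Choosing $f$ to be a suitably normalized outer function $(1-\bar\lambda z)^{-b}$, with the exponent $b$ tuned so that $\Norm{f}{X}\asymp1$, forces this lower bound to be of order $\varphi_X\big(1-\tfrac{1-r}{n}\big)$ through the same kernel asymptotics. Using such outer functions in place of integer powers is precisely what removes the restriction $p\in2\mathbb N$ present in \eqref{eq:Hardy_interp}--\eqref{eq:Bergm_interp}.

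The main obstacle is the sharp two-parameter asymptotics of $\sup_{z}\Norm{\Phi_z}{X^*}$, with the correct simultaneous dependence on $n$ and $1-r$ and the correct localization of the extremal $z$ near $1-\tfrac{1-r}{n}$; the matching of the upper and lower bounds rests entirely on these estimates being asymptotically sharp. A secondary difficulty, specific to the lower bounds, is that one must control the \emph{quotient} norm $\Norm{\cdot}{H^\infty/B_\sigma H^\infty}$ from below rather than the easier sup-norm, which is what forces the passage to the operator model and the explicit norming vector $e_0$; and, specific to the Bergman case, that $K_{B_\sigma}$ lies in $H^2$ while the norm is measured in $A^p(\beta)$, so $\Phi_z$ must be computed for the weighted pairing rather than simply taken to be $k_z^{B_\sigma}$.
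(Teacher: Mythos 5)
The main gap is in your lower bound. Your test function $f=c\,(1-\bar\lambda z)^{-b}$ with $|\lambda|=r$ and a fixed exponent $b$ cannot produce the factor $n^{1/p}$: normalizing $\Norm{f}{H^p}\asymp1$ forces $c\asymp(1-r)^{b-1/p}$, whence $\Norm{f}{\infty}=c\,(1-r)^{-b}\asymp(1-r)^{-1/p}$. Since $f$ is then itself an admissible $H^\infty$ interpolant of its own trace, $\Norm{f}{H^\infty/b_\lambda^nH^\infty}\le\Norm{f}{\infty}\asymp(1-r)^{-1/p}$, which falls short of the target $(n/(1-r))^{1/p}$ by exactly $n^{1/p}$; the same loss occurs in the Bergman scale. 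The extremal data must be concentrated at the scale $(1-r)/n$, not $1-r$, and then the real difficulty appears: one must show that the trace of such a function on the $n$-fold point $\lambda$ (which sits at distance $1-r$ from the concentration point) still forces every $H^\infty$ interpolant to be large. Your proposed mechanism --- testing $f(M_{B})$ on the single vector $e_0$ --- does not address this and is itself lossy (for a function of height $(n/(1-r))^{1/p}$ supported on an arc of length $(1-r)/n$ one gets $\Norm{fe_0}{H^2}\lesssim n^{-1/2}(n/(1-r))^{1/p}$). The paper instead takes $\phi_n=Q_n^N$ with $Q_n=\frac{1-r^2}{(1+rz)^2}D_n^2(b_{-r})$ (transplanted squared Dirichlet kernels, with the power $N$ adjusted to $\beta$), and bounds the quotient norm from below by $\Norm{\Psi_n*F_n}{\infty}\ge(\Psi_n*F_n)(1)$ where $\Psi_n=\phi_n\circ b_{-r}$ and $F_n$ is the Fej\'er kernel, exploiting the positivity of the Taylor coefficients of $\Psi_n$. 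Some device of this kind is indispensable and is entirely absent from your plan.

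On the upper bounds: for $X=H^p$ your argument coincides with the paper's ($|P_Bf(\zeta)|\le\Norm{f}{H^p}\Norm{k_\zeta^B}{H^q}$ plus the uniform kernel estimate of Proposition \ref{lem:rep_kern_norms_est}; note these estimates are proved uniformly in $\sigma$, not by reduction to the one-point configuration). For $X=A^p(\beta)$ you propose replacing $k_\zeta^B$ by the reproducing kernel $\Phi_z$ of the orthogonal complement of $B_\sigma A^2(\beta)$ for the weighted pairing and estimating $\sup_z\Norm{\Phi_z}{A^{p'}(\beta)}$. That is a coherent alternative in outline, but it is not what the paper does and none of the required estimates for such weighted kernels are available here; the paper keeps the Cauchy pairing and the same kernel $k_\zeta^B$, converts $\langle f,k_\zeta^B\rangle$ into an area integral via the Green formula \eqref{gr} (and its fractional variant \eqref{gr2} when $\beta>0$), and reduces everything to the $A^q(\gamma)$-norms of derivatives of $k_\zeta^B$ estimated in Proposition \ref{lem:rep_kern_norms_est}. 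As written, your upper bound for the Bergman case defers all of the substantive work to unproved kernel asymptotics.
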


In view of the discussion in Subsection \ref{sub:motivation}, the
following corollary is immediate: 
\begin{cor}
\label{cor:matrix} Let $n\geq1$, $r\in[0,\,1)$, $p\in[1,\,+\infty)$
and $\beta>-1$. Then we have 
\[
\sup_{\Norm{f}{H^{p}}\leq1}\big\{\mathcal{M}(f,\,\sigma):\:\sigma\in\mathbb{D}^{n},\:\max_{\lambda\in\sigma}\left|\lambda\right|\leq r\big\}\asymp\left(\frac{n}{1-r}\right)^{\frac{1}{p}}
\]
with constants depending only on $p$, and 
\[
\sup_{\Norm{f}{A^{p}(\beta)}\leq1}\big\{\mathcal{M}(f,\,\sigma):\:\sigma\in\mathbb{D}^{n},\:\max_{\lambda\in\sigma}\left|\lambda\right|\leq r\big\}\asymp\bigg(\frac{n}{1-r}\bigg)^{\frac{2+\beta}{p}}
\]
with constants depending only on $p$ and $\beta$. 
\end{cor}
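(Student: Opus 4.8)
The plan is to read the corollary off directly from the identity derived in Subsection~\ref{sub:motivation} combined with Theorem~\ref{thm:Interp}; no new analytic input is required. Recall that in Subsection~\ref{sub:motivation} it was shown that for any division-closed Banach space $X\subset\mathcal{H}ol(\mathbb{D})$ containing $H^\infty$ and any finite sequence $\sigma$ in $\mathbb{D}$ one has $\sup_{\|f\|_X\leq1}\mathcal{M}(f,\sigma)=c(\sigma,X,H^\infty)$. Taking the supremum over all $\sigma\in\mathbb{D}^n$ with $\max_{\lambda\in\sigma}|\lambda|\leq r$ and recalling the definition of $C_{n,r}(X,H^\infty)$ yields
\[
\sup_{\|f\|_X\leq1}\big\{\mathcal{M}(f,\sigma):\:\sigma\in\mathbb{D}^n,\:\max_{\lambda\in\sigma}|\lambda|\leq r\big\}=C_{n,r}(X,H^\infty).
\]

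First I would verify that the two spaces of interest meet the hypotheses above. Since $\|f\|_{H^p}\leq\|f\|_{H^\infty}$ and, for $\beta>-1$, the measure $(1-|z|^2)^\beta\,\mathrm{d}\mathcal{A}$ is finite on $\mathbb{D}$, both inclusions $H^\infty\subset H^p$ and $H^\infty\subset A^p(\beta)$ are continuous. Both spaces have bounded point evaluations at every point of $\mathbb{D}$, so the trace $f|_\sigma$ is well defined for each $f\in X$; and both are division-closed, since dividing an element that vanishes at $\lambda\in\mathbb{D}$ by the Blaschke factor $b_\lambda$ returns a function of the space (for $H^p$ this is immediate from $|b_\lambda|=1$ on $\mathbb{T}$, and for $A^p(\beta)$ it is part of the standard theory of weighted Bergman spaces). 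Thus the identity of the preceding paragraph applies with $X=H^p$ and with $X=A^p(\beta)$.

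With these two cases in hand, the corollary follows by substituting the asymptotics of Theorem~\ref{thm:Interp}: taking $X=H^p$ gives $C_{n,r}(H^p,H^\infty)\asymp\left(n/(1-r)\right)^{1/p}$ with constants depending only on $p$, and taking $X=A^p(\beta)$ gives $C_{n,r}(A^p(\beta),H^\infty)\asymp\left(n/(1-r)\right)^{(2+\beta)/p}$ with constants depending only on $p$ and $\beta$; these are precisely the two asserted estimates. The argument presents no genuine obstacle: the only point requiring attention is the routine verification that the structural hypotheses invoked in Subsection~\ref{sub:motivation}---continuous containment of $H^\infty$, boundedness of point evaluations, and division-closedness---are satisfied by $H^p$ and $A^p(\beta)$, which we have recorded above. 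This is exactly why the statement is immediate once Theorem~\ref{thm:Interp} is available.
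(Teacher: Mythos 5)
Your proposal is correct and follows exactly the route the paper intends: the paper derives the identity $\sup_{\Norm{f}{X}\leq1}\mathcal{M}(f,\sigma)=c(\sigma,X,H^{\infty})$ in Subsection~\ref{sub:motivation} and then declares the corollary ``immediate'' from Theorem~\ref{thm:Interp}. You have merely made explicit the routine verification (continuous containment of $H^{\infty}$, division-closedness of $H^p$ and $A^p(\beta)$) that the paper leaves implicit, which is a harmless and accurate elaboration.
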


The techniques employed to prove Theorem \ref{thm:Interp} make use
of the theory of model spaces and their reproducing kernels. They
naturally lead to asymptotically sharp estimates of the embedding
constants $\mathcal{E}_{n,\,r}(H^{\infty},\,A^{p}(\beta))$, $1\leq p<\infty$,
$\beta>-1$. 
\begin{thm}
\label{thm:Embedd} Let $n\geq1$, $r\in[0,\,1)$, $p\in[1,\,+\infty)$
and $\beta>-1$. Then we have 
\begin{equation}
\mathcal{E}_{n,\,r}(H^{\infty},\,A^{p}(\beta))\asymp\bigg(\frac{n}{1-r}\bigg)^{\frac{2+\beta}{p}}\label{eq:embedd_bergman}
\end{equation}
with constants depending only on $p$ and $\beta$. 
\end{thm}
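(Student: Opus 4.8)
The plan is to prove \eqref{eq:embedd_bergman} by establishing the upper and lower bounds separately, exploiting the close structural parallel with the interpolation quantity $C_{n,\,r}(A^{p}(\beta),\,H^{\infty})$ in \eqref{eq:strength_bergm}, which exhibits the \emph{same} asymptotic order. The underlying reason is that both quantities are governed by the same extremal object: the behaviour of model-space reproducing kernels $k_{\lambda}^{B_{\sigma}}$ and their derivatives near a point $\lambda$ with $|\lambda|$ close to $r$. Indeed, a rational function $f\in\mathcal{R}_{n,\,r}$ with poles $1/\overline{\lambda_{j}}$ (where $|\lambda_{j}|\le r$) lies, up to the standard correspondence, in the model space $K_{B_{\sigma}}$ with $\sigma=(\lambda_{1},\dots,\lambda_{n})$, and the partial fraction / reproducing-kernel expansion converts the embedding inequality into a statement about $\Norm{k_{\lambda}^{B_{\sigma}}}{}$ in the relevant norms.

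For the \textbf{upper bound}, I would first reduce to a single rational function $f\in\mathcal{R}_{n,\,r}$ and normalize $\Norm{f}{A^{p}(\beta)}=1$, aiming to show $\Norm{f}{H^{\infty}}\lesssim\bigl(\tfrac{n}{1-r}\bigr)^{(2+\beta)/p}$. The key is to control the pointwise values $|f(z)|$ for $z\in\mathbb{D}$. Writing $f$ via its representation in $K_{B_{\sigma}}$, one bounds $|f(z)|=|\langle f, k_{z}^{B_{\sigma}}\rangle|$ using duality between $A^{p}(\beta)$ and its conjugate index, so that $|f(z)|\le\Norm{f}{A^{p}(\beta)}\cdot\Norm{k_{z}^{B_{\sigma}}}{A^{p'}(\beta')}$ (with the appropriate dual weight). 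The heavy lifting is therefore the norm estimate $\Norm{k_{z}^{B_{\sigma}}}{}\lesssim\bigl(\tfrac{n}{1-r}\bigr)^{(2+\beta)/p}$ uniformly in $z$, which is precisely the content of the reproducing-kernel estimates promised in Section~\ref{est:norms}; I would invoke those directly. The point $z$ where $|f|$ is maximal can be taken in $\overline{\mathbb{D}}$ by the maximum principle, and since the poles are at distance $\ge\tfrac{1-r}{r}$ from $\mathbb{T}$, all kernel estimates remain uniform.

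For the \textbf{lower bound}, the strategy is to exhibit an explicit near-extremal rational function. The natural candidate is a reproducing kernel itself, or a suitable power thereof: take $\lambda$ with $|\lambda|=r$ and set $\sigma=\sigma_{n,\lambda}$, so the relevant test function is built from $(1-\overline{\lambda}z)^{-m}$ for an appropriate $m$ comparable to $n$. One then computes, or estimates from below using the sharp asymptotics of Section~\ref{est:norms}, both $\Norm{f}{H^{\infty}}$ (achieved essentially at $z=\lambda/|\lambda|$ on the boundary) and $\Norm{f}{A^{p}(\beta)}$, and checks that their ratio is $\gtrsim\bigl(\tfrac{n}{1-r}\bigr)^{(2+\beta)/p}$. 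This is a direct calculation with the function $z\mapsto(1-\overline{\lambda}z)^{-m}$: the $H^{\infty}$ norm is of order $(1-r)^{-m}$, while the weighted Bergman norm is computed via the standard integral $\int_{\mathbb{D}}|1-\overline{\lambda}z|^{-mp}(1-|z|^{2})^{\beta}\,{\rm d}\mathcal{A}(z)$, whose asymptotics as $r\to1^{-}$ are classical.

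I expect the \textbf{main obstacle} to be matching the constants in the two bounds so that the order in both $n$ and $1-r$ agrees, rather than obtaining the correct power in only one of the two parameters. Concretely, the delicate point is the uniformity of the reproducing-kernel norm estimate across all admissible sequences $\sigma\in\mathbb{D}^{n}$ with $\max_{\lambda\in\sigma}|\lambda|\le r$ (not merely the one-point sequence $\sigma_{n,\lambda}$), since the lower bound only needs one good $\sigma$ but the upper bound must handle all of them simultaneously. This is exactly where the general kernel estimates of Section~\ref{est:norms} are essential, and I would lean on them rather than re-deriving the bounds for each configuration of poles; the degenerate one-point case then calibrates the sharp constant and confirms that the exponent $\tfrac{2+\beta}{p}$ cannot be improved.
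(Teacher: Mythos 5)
Your overall architecture (pass to the model space $K_{zB}$, bound the sup norm of $f$ by an extremal kernel estimate, test against a one-point pole configuration for sharpness) is the same as the paper's, but both halves contain a step that fails as written.

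\textbf{Upper bound.} The reproducing identity $f(z)=\langle f,k_z^{B}\rangle$ holds for the \emph{Cauchy} pairing $\langle h,g\rangle=\sum_k\hat h(k)\overline{\hat g(k)}$, which is not the weighted area pairing realizing the duality of $A^{p}(\beta)$ with its conjugate space. Hence the inequality $|f(z)|\le\|f\|_{A^{p}(\beta)}\,\|k_z^{B}\|_{A^{p'}(\beta')}$ is not an instance of H\"older/duality and cannot be invoked. The paper bridges exactly this gap with the Green--Littlewood--Paley formula $\langle\phi,\psi\rangle=(\phi',S^{*}\psi)+\phi(0)\overline{\psi(0)}$ (and its fractional-derivative version \eqref{gr2} when $\beta>0$), which converts the Cauchy pairing into an area integral; only then does H\"older apply, and the quantity to be estimated is $\|(k_z^{B})'\|_{A^{q}(-(q-1)\beta)}$ (or a higher derivative), not $\|k_z^{B}\|$ itself. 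This is not cosmetic: with $\gamma=-(q-1)\beta$ one gets $\|(k_z^{B})'\|_{A^{q}(\gamma)}\lesssim(n/(1-r))^{2-(\gamma+2)/q}=(n/(1-r))^{(2+\beta)/p}$, i.e.\ the extra derivative is precisely what produces the exponent $\tfrac{2+\beta}{p}$; the undifferentiated kernel in a dual Bergman norm falls one full power short. The cases $p=1$ (Bloch-type estimate \eqref{eq:Bloch_norms}) and $\beta>0$ (fractional differentiation $D_l$ and Corollary \ref{cor:Anton_Cor}) also require separate treatment that the proposal does not supply.

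\textbf{Lower bound.} The test function $(1-\overline{\lambda}z)^{-m}$ with $|\lambda|=r$ and $m\asymp n$ does not work. A direct computation gives $\|(1-\overline{\lambda}z)^{-m}\|_{\infty}=(1-r)^{-m}$ while, by \eqref{bas}, $\|(1-\overline{\lambda}z)^{-m}\|_{A^{p}(\beta)}\asymp(1-r)^{-m+(2+\beta)/p}$ for $mp>\beta+2$, so the ratio is $\asymp(1-r)^{-(2+\beta)/p}$ with \emph{no} dependence on $n$: the exponential growth in $m$ cancels exactly between numerator and denominator. To capture the factor $n^{(2+\beta)/p}$ one needs a genuinely different element of the one-point model space, namely a power of the transplanted Dirichlet kernel, $Q_m=\frac{1-r^2}{(1+rz)^2}\big(\sum_{k=0}^{m-1}b_{-r}^{k}\big)^{2}$ raised to the power $N=l+2$ with $\beta\in(l-1,l]$ and $m=[n/(2N)]$: its sup norm is $m^{2N}\big(\tfrac{1+r}{1-r}\big)^{N}$ (polynomial, not exponential, in $m$), while $\|Q_m^{N}\|_{A^{p}(\beta)}\lesssim m^{2N-(2+\beta)/p}(1-r)^{-N+(2+\beta)/p}$, and only this combination yields the ratio $(n/(1-r))^{(2+\beta)/p}$.
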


\bigskip{}


\section{\label{sec:Main-ingredients} Main ingredients}

In this section we give the main ingredients and tools we use in the
proofs of Theorem~\ref{thm:Interp} and Theorem~\ref{thm:Embedd}.
We begin with the definition of model spaces. \smallskip{}

\subsection{\label{sub:Model-spaces-and} Model spaces}

Let $\Theta$ be an \textit{inner function}, i.e., $\Theta\in H^{\infty}$
and $\vert\Theta(\xi)|=1$ for a.e. $\xi\in\mathbb{T}$. We define
the model subspace $K_{\Theta}$ of the Hardy space $H^{2}$ by 
\[
K_{\Theta}=H^{2}\cap\left(\Theta H^{2}\right)^{\perp}=H^{2}\ominus\Theta H^{2}.
\]
By the famous theorem of Beurling, these and only these subspaces
of $H^{2}$ are invariant with respect to the backward shift operator
$S^{*}$ defined by 
\[
S^{*}f=\frac{f-f(0)}{z}.
\]
We refer to \cite{Nik1} for the general theory of the spaces $K_{\Theta}$
and their numerous applications. Given $\sigma\in\mathbb{D}^{n}$,
put $B=B_{\sigma}$ and consider the model subspace $K_{B}$. Let
us first establish the relation between $\mathcal{R}_{n},$ $\mathcal{R}_{n,\,r},$
and model spaces $K_{B}$. It is well known that if 
\[
\sigma=(\lambda_{1},...,\lambda_{1},\lambda_{2},...,\lambda_{2},...,\lambda_{t},...,\lambda_{t})\in\mathbb{D}^{n},
\]
where every $\lambda_{s}$ is repeated according to its multiplicity
$n_{s}$, $\sum_{s=1}^{t}n_{s}=n$, then 
\[
K_{B}=\overline{{\rm span}}\{k_{\lambda_{s},\,j}:\,1\leq s\leq t,\,0\leq j\leq n_{s}-1\},
\]
where for $\lambda\neq0$, $k_{\lambda,\,j}=\left(\frac{d}{d\overline{\lambda}}\right)^{j}k_{\lambda}$
and $k_{\lambda}(z)=\frac{1}{1-\overline{\lambda}z}$ is the standard
Cauchy kernel at the point $\lambda,$ whereas $k_{0,\,j}=z^{j}$.
Thus the subspace $K_{B}$ consists of rational functions of the form
$P/Q$, where $P\in\mathcal{P}_{n-1}$ and $Q\in\mathcal{P}_{n}$,
with the poles $1/\overline{\lambda}_{1},\dots,1/\overline{\lambda}_{n}$
of corresponding multiplicities (including possible poles at $\infty$).
Hence, if $f\in\mathcal{R}_{n}$ and $1/\overline{\lambda}_{1},\dots,1/\overline{\lambda}_{n}$
are the poles of $f$, then $f\in K_{zB}$ with $\sigma=(\lambda_{1},\dots,\lambda_{n})$.

For any inner function $\Theta$ the reproducing kernel of the model
space $K_{\Theta}$ corresponding to a point $\zeta\in\mathbb{D}$
is of the form 
\[
k_{\zeta}^{\Theta}(z)=
\frac{1-\overline{\Theta(\zeta)}\Theta(z)}{1-\overline{\zeta}z}=
(1-\overline{\Theta(\zeta)}\Theta(z))k_{\zeta}(z).
\]
We recall the definition of the Malmquist--Walsh family
$(e_{j})_{j=1}^{n}$ for a sequence $\sigma=(\lambda_{1},\dots,\lambda_{n})\in\mathbb{D}^{n}$
(see \cite[p. 117]{Nik2}): 
\[
e_{1}=\left(1-\vert\lambda_{1}\vert^{2}\right)k_{\lambda_{1}},\qquad e_{j}=\left(1-\abs{\lambda_{j}}^{2}\right)^{\frac{1}{2}}\left(\prod_{i=1}^{j-1}b_{\lambda_{i}}\right)k_{\lambda_{j}},\quad\,j=2\dots n.
\]
Note that $(e_{j})_{j=1}^{n}$ is an orthonormal basis of $K_{B}$
for $B=B_{\sigma}$. The \emph{model operator} $M_{B}$ evoked above
in Section \ref{sub:motivation} is the compression of the shift operator
$S:\,f\mapsto zf$ on $K_{B}$, i.e., $M_{B}f=P_{B}Sf$, $f\in K_{B}$,
where $P_{B}$ is the orthogonal projection on $K_{B}$.


\subsection{Upper bounds in Theorems \ref{thm:Interp} and \ref{thm:Embedd}.}

\label{str} In this subsection we outline the strategy of the proof
of Theorems \ref{thm:Interp} and \ref{thm:Embedd}.

From now on we denote by $\left\langle \cdot,\,\cdot\right\rangle $
the Cauchy sesquilinear form: 
\[
\left\langle h,\,g\right\rangle =\sum_{k\geq0}\hat{h}(k)\overline{\hat{g}(k)},
\]
which makes sense for any $h=\sum_{k\geq0}\hat{h}(k)z^{k}\in\mathcal{H}ol(\mathbb{D})$
and $g=\sum_{k\geq0}\hat{g}(k)z^{k}$ analytic in the disc $(1+\delta)\mathbb{D}$
for some $\delta>0$. If $h,\,g\in H^{2}$, then $\left\langle \cdot,\,\cdot\right\rangle $
coincides with the usual scalar product in $L^{2}(\mathbb{T})$, 
\[
\left\langle h,\,g\right\rangle =\int_{\mathbb{T}}h(u)\overline{g(u)}{\rm d}m(u),
\]
where $m$ is the normalized Lebesgue measure on $\mathbb{T}$. Also,
denote by $(h,\,g)$ the scalar product on $A^{2}$ defined by 
\[
(h,\,g)=\int_{\mathbb{D}}h(u)\overline{g(u)}{\rm d}\mathcal{A}(u),\qquad h,\,g\in A^{2}.
\]

As in \cite{Zar1,Zar2}, we will use the following interpolation operator:
\begin{equation}
f\mapsto P_{B}f=\sum_{k=1}^{n}\left\langle f,\,e_{k}\right\rangle e_{k},\label{eq:interpolator}
\end{equation}
where $\left(e_{k}\right)_{k=1}^{n}$ is the Malmquist--Walsh
basis of $K_{B}$. If $f\in H^{2}$, then this is the usual orthogonal
projection of $f$ onto $K_{B}$. However the formula $P_{B}f=\sum_{k=1}^{n}\left\langle f,\,e_{k}\right\rangle e_{k}$
correctly defines this operator for any $f\in\mathcal{H}ol(\mathbb{D})$.
\smallskip{}

\subsubsection{The upper bounds in Theorem \textup{\ref{thm:Interp} }}

For $X=H^{p}$, $1\leq p<+\infty$, the proof is simple. We have 
\[
\abs{P_{B}f(\zeta)}=\abs{\left\langle f,\,k_{\zeta}^{B}\right\rangle }\leq\Norm{f}{H^{p}}\Norm{k_{\zeta}^{B}}{H^{q}},
\]
where $q$ is the conjugate of $p$. It remains to use the estimate
for $\Norm{k_{\zeta}^{B}}{H^{q}}$, $\zeta\in\mathbb{T}$, given in
Proposition \ref{lem:rep_kern_norms_est}. \smallskip{}

To relate $P_{B}f(\zeta)=\left\langle f,\,k_{\zeta}^{B}\right\rangle $
to the norm of $f$ in a Bergman space $A^{p}(\beta)$, we use the
simplest form of the Green formula, 
\begin{equation}
\left\langle \phi,\,\psi\right\rangle =(\phi',\,S^{*}\psi)+\phi(0)\overline{\psi(0)},\label{gr}
\end{equation}
which is true, in particular, when $\phi$ is analytic in some disc
$(1+\delta)\mathbb{D}$, $\delta>0$, and $\psi\in H^{\infty}$. We
then apply it to $\phi=k_{\zeta}^{B}$ and $\psi=f\in X\cap H^{\infty}$.

If $-1<\beta\le0$, then we apply the H\"older inequality and it remains
to estimate the norm of the derivative $(k_{\zeta}^{B})'$ in the
dual Bergman space. This norm is estimated in Proposition \ref{lem:rep_kern_norms_est}.
\smallskip{}

To treat the case $\beta>0$, we need a modified Green formula. Recall
that the fractional differentiation operator $D_{\alpha}$, $-1<\alpha<\infty$,
is defined by $D_{\alpha}(z^{m})=\frac{\Gamma(m+2+\alpha)}{(m+1)!\Gamma(2+\alpha)}z^{m}$,
$m=0,\,1,\,2,\,\dots$, and extended linearly to the whole space $\mathcal{H}ol(\mathbb{D})$
(see \cite[Lemma 1.17]{HKZ}). Then, for a function $f$ analytic
in a neighborhood of $\overline{\mathbb{D}}$ and $-1<\alpha<\infty$,
we have 
\begin{equation}
\int_{\mathbb{D}}f(u)\overline{g(u)}{\rm d}\mathcal{A}(u)=(\alpha+1)\int_{\mathbb{D}}D_{\alpha}f(u)\overline{g(u)}\left(1-\left|u\right|^{2}\right)^{\alpha}{\rm d}\mathcal{A}(u)\label{gr2}
\end{equation}
for any $g\in H^{\infty}$ (see \cite[Lemma 1.20]{HKZ}). Note that
even for $l\in\mathbb{N}$, $D_{l}f$ differs from the usual derivative
$f^{(l)}$. However, 
\begin{equation}
\|D_{l}f\|_{A^{p}(\beta)}\asymp\|f^{(l)}\|_{A^{p}(\beta)}+\sum_{j=0}^{l-1}|f^{(j)}(0)|\label{eq:}
\end{equation}
for any Bergman space $A^{p}(\beta)$.

Formula \eqref{gr2} reduces the problem to estimates of the Bergman
norms of $(k_{\zeta}^{B})^{(l)}$, $l\in\mathbb{N}$, which are again
given in Proposition \ref{lem:rep_kern_norms_est}. \smallskip{}

\subsubsection{The upper bound in Theorem\textup{ \ref{thm:Embedd}} }

To prove the upper bound in Theorem \ref{thm:Embedd}
it is sufficient to note that given $f\in\mathcal{R}_{n,\,r}$ with
poles $1/\overline{\lambda}_{1},\dots,1/\overline{\lambda}_{n}$ (repeated
according to multiplicities and satisfying $\left|\lambda_{i}\right|\leq r$
for all $i=1\dots n$), we have $f\in K_{zB}$ where $B=B_{\sigma}$,
$\sigma=(\lambda_{1},\dots,\lambda_{n})$. Therefore 
\[
f(\zeta)=\left\langle f,\,k_{\zeta}^{zB}\right\rangle .
\]
This means that $f$ pointwise coincides with the interpolation operator
\eqref{eq:interpolator} and we can apply the same reasoning as above
with $zB$ instead of $B$. \smallskip{}


\subsection{Lower bounds }

The lower bound problem in Theorem \ref{thm:Interp} is treated by
using the ``worst'' interpolation $n$-tuple $\sigma=\sigma_{\lambda,\,n}=(\lambda,\,...,\,\lambda)\in\mathbb{D}^{n}$,
a one-point set of multiplicity $n$ (a Carath\'eodory--Schur type
interpolation problem). The ``worst'' interpolation data comes from
the Dirichlet kernels $\sum_{k=0}^{n-1}z^{k}$ transplanted from the
origin to $\lambda$. The lower bound in \eqref{eq:embedd_bergman}
is achieved by rational functions of the same kind (i.e., whose poles
are concentrated at the same point $1/\bar{\lambda}$). \bigskip{}


\section{\label{est:norms} Estimates of norms of reproducing kernels}

We will use the following simple Bernstein-type inequality for rational
functions (see, e.g., \cite[Theorem 2.3]{BaZa1}). Let $\sigma=(\lambda_{1},\dots,\lambda_{n})\in\mathbb{D}^{n}$,
let $B=B_{\sigma}$ be the corresponding finite Blaschke product and
$r=\max_{\lambda\in\sigma}\vert\lambda\vert$. Given $l\in\mathbb{N}$
we have 
\begin{equation}
\Norm{f^{(l)}}{\infty}\lesssim\left(\frac{n}{1-r}\right)^{l}\Norm{f}{\infty}\label{prop:Bernst_type_ineq}
\end{equation}
for any $f\in K_{B}$.

We need to introduce an additional scale of Banach spaces of holomorphic
functions in $\mathbb{D}$. The weighted Bloch space $\mathcal{B}_{\alpha}$,
$0\leq\alpha\leq1$, consists of functions $f\in\mathcal{H}ol(\mathbb{D})$
satisfying 
\[
\Norm{f}{\mathcal{B}_{\alpha}}=\sup_{z\in\mathbb{D}}\left|f'(z)\right|\left(1-\left|z\right|\right)^{\alpha}<\infty
\]
(which is in fact a seminorm). 
\begin{prop}
\label{lem:rep_kern_norms_est}Let $\sigma=(\lambda_{1},\dots,\lambda_{n})\in\mathbb{D}^{n}$
and $B=B_{\sigma}$ be the corresponding finite Blaschke product,
$r=\max_{\lambda\in\sigma}\vert\lambda\vert$ and $z\in\overline{\mathbb{D}}$.
The following inequalities hold: 
\begin{enumerate}
\item Given $q\in(1,\,+\infty]$ we have 
\begin{equation}
\Norm{k_{z}^{B}}{H^{q}}\lesssim\left(\frac{n}{1-r}\right)^{1-\frac{1}{q}}.\label{eq:Hpnorms}
\end{equation}
\item Given $l\in\mathbb{N}$ and \textup{$0\leq\alpha\leq1$} we have 
\begin{equation}
\Norm{(k_{z}^{B})^{(l)}}{\mathcal{B}_{\alpha}}\lesssim\left(\frac{n}{1-r}\right)^{l+2-\alpha}.\label{eq:Bloch_norms}
\end{equation}
\item Given $q\in(1,\,+\infty)$ and $\gamma\in(-1,\,q-1]$ we have 
\begin{equation}
\Norm{(k_{z}^{B})'}{A^{q}(\gamma)}\lesssim\left(\frac{n}{1-r}\right)^{2-\frac{\gamma+2}{q}}.\label{eq:1st_estimate_AntonLemma}
\end{equation}
\item Given $q\in(1,\,+\infty)$ and $\gamma\in(-1,\,q]$ we have 
\begin{equation}
\Norm{(k_{z}^{B})''}{A^{q}(\gamma)}\lesssim\left(\frac{n}{1-r}\right)^{3-\frac{\gamma+2}{q}}.\label{eq:2nd_estimate_AntonLemma}
\end{equation}
\end{enumerate}
All involved constants may depend on $q$, $\alpha$ and $\gamma$
but do not depend on $n$, $r$ and $z$. 
\end{prop}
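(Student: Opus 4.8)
Throughout write $N=\frac{n}{1-r}$ and recall $k_z^{B}=(1-\overline{B(z)}B)k_z$ with $k_z(w)=(1-\bar z w)^{-1}$, so $k_z^{B}\in K_{B}$. Two elementary facts are used repeatedly: for $w\in\mathbb{T}$ one has $|1-\bar z w|=|w-z|$, while for $w\in\overline{\mathbb{D}}$ one has $|1-\bar z w|\ge 1-|w|$; and $B\in K_{zB}$ (a short computation gives $B\perp zBH^{2}$), so that \eqref{prop:Bernst_type_ineq} applied in $K_{zB}$ (degree $n+1$, same $r$) yields $\|B^{(j)}\|_{\infty}\lesssim_{j}N^{j}$, while the same inequality in $K_{B}$ gives $\|(k_z^{B})^{(m)}\|_{\infty}\lesssim N^{m}\|k_z^{B}\|_{\infty}$. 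For \eqref{eq:Hpnorms} I expand $k_z^{B}=\sum_{j}\overline{e_{j}(z)}\,e_{j}$ in the Malmquist--Walsh basis. Since $|e_{j}(z)|\le (1-|\lambda_{j}|^{2})^{1/2}/(1-|\lambda_{j}|)\le\sqrt{2}/\sqrt{1-r}$ and $\|e_{j}\|_{\infty}\le\sqrt{2}/\sqrt{1-r}$, summation gives both $\|k_z^{B}\|_{H^{2}}^{2}=\sum_{j}|e_{j}(z)|^{2}\lesssim N$ and $\|k_z^{B}\|_{\infty}\lesssim N$. Hence for $w\in\mathbb{T}$ one has $|k_z^{B}(w)|\le\min(2N,\,2/|w-z|)$, and integrating, $\int_{\mathbb{T}}\min(2N,2/|w-z|)^{q}\,dm\lesssim N^{q-1}$ for every $q>1$, i.e. $\|k_z^{B}\|_{H^{q}}\lesssim N^{1-1/q}$ (and $q=\infty$ is the bound $\|k_z^{B}\|_{\infty}\lesssim N$).

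For \eqref{eq:Bloch_norms} I use the Leibniz rule together with $k_z^{(m)}(w)=m!\,\bar z^{m}(1-\bar z w)^{-m-1}$ and $\|B^{(j)}\|_{\infty}\lesssim_j N^{j}$ to get the pointwise bound $|(k_z^{B})^{(l+1)}(w)|\lesssim\sum_{j=0}^{l+1}N^{j}|1-\bar z w|^{-(l+2-j)}$. I multiply by $(1-|w|)^{\alpha}$ and split according to $1-|w|\gtrless 1/N$: when $1-|w|\ge 1/N$, using $|1-\bar z w|\ge 1-|w|\ge 1/N$ each summand is $\le N^{j}(1-|w|)^{\alpha-(l+2-j)}\le N^{l+2-\alpha}$; when $1-|w|<1/N$ I instead invoke the Bernstein cap $\|(k_z^{B})^{(l+1)}\|_{\infty}\lesssim N^{l+2}$ and $(1-|w|)^{\alpha}<N^{-\alpha}$. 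Taking the supremum yields $\|(k_z^{B})^{(l)}\|_{\mathcal{B}_{\alpha}}\lesssim N^{l+2-\alpha}$.

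For \eqref{eq:1st_estimate_AntonLemma} and \eqref{eq:2nd_estimate_AntonLemma} I start from the exact identity $(k_z^{B})'(w)=\big(\bar z\,k_z^{B}(w)-\overline{B(z)}B'(w)\big)(1-\bar z w)^{-1}$ (and its once-more differentiated analogue), giving
\[
|(k_z^{B})'(w)|\lesssim\frac{|k_z^{B}(w)|+|B'(w)|}{|1-\bar z w|},\qquad |k_z^{B}(w)|\le\min\!\Big(2N,\tfrac{2}{|1-\bar z w|}\Big),
\]
where Schwarz--Pick and Bernstein bound $|B'(w)|\le\min\big(2N,\,(1-|B(w)|^{2})/(1-|w|^{2})\big)$. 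I then split $\mathbb{D}$ into the Carleson box $E=\{|1-\bar z w|\le 1/N\}$ and its complement $F$. On $E$ the Bernstein cap $|(k_z^{B})'|\lesssim N^{2}$ together with $\int_{E}(1-|w|^{2})^{\gamma}\,d\mathcal{A}\lesssim N^{-(\gamma+2)}$ already gives the target order $N^{2q-\gamma-2}$; the hypothesis $\gamma\le q-1$ enters here only through $2q-\gamma-2\ge q-1>0$. On $F$ the $k_z^{B}$-part contributes $\int_{F}(1-|w|^{2})^{\gamma}|1-\bar z w|^{-2q}\,d\mathcal{A}$, which is supercritical ($2q>\gamma+2$) and, by the Forelli--Rudin estimate with the cut-off $|1-\bar z w|>1/N$, is again $\lesssim N^{2q-\gamma-2}$.

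The main obstacle is the $B'$-contribution on $F$. Estimating $|B'|$ by the constant $N$ is sharp only on a boundary collar of width $\sim 1/N$ and grossly overestimates $B'$ in the bulk (for $B=z^{n}$, $B'$ is exponentially small away from $\mathbb{T}$), so a crude pointwise bound loses a factor $N$ exactly in the range $q\le\gamma+2$. The plan is to retain the Schwarz--Pick gain by writing $1-|B(w)|^{2}\le(1-|w|^{2})\sum_{i}(1-|\lambda_{i}|^{2})|1-\bar\lambda_{i}w|^{-2}$, reducing matters to the two-point integrals $\int_{\mathbb{D}}(1-|w|^{2})^{\gamma}|1-\bar\lambda_{i}w|^{-2}|1-\bar z w|^{-q}\,d\mathcal{A}$, whose supercritical behaviour (ensured by $|\lambda_{i}|\le r$ and the constraint on $\gamma$) localizes the mass near $|1-\bar z w|\sim 1/N$ and produces the clean power $N^{2q-\gamma-2}$. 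At the endpoint $\gamma=q-1$ this is transparent, as the Littlewood--Paley inequality $\int_{\mathbb{D}}|f'|^{q}(1-|w|)^{q-1}\,d\mathcal{A}\lesssim\|f\|_{H^{q}}^{q}$ combined with \eqref{eq:Hpnorms} gives \eqref{eq:1st_estimate_AntonLemma} directly. Estimate \eqref{eq:2nd_estimate_AntonLemma} is handled identically, with cap $N^{3}$, leading power $|1-\bar z w|^{-3q}$, and the correspondingly enlarged range $\gamma\le q$; controlling the off-diagonal $B'$-terms without a logarithmic or polynomial loss is the delicate point throughout.
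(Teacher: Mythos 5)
Parts (1) and (2) of your argument are correct. For \eqref{eq:Hpnorms} with $q\notin\{2,\infty\}$ your route is genuinely different from the paper's: the paper interpolates between $q=2$ and $q=\infty$ when $q\ge2$ and, for $1<q<2$, invokes Cohn's theorem on the $H^p$-boundedness of $P_B$ together with a Nikolskii-type inequality, whereas your pointwise bound $|k_z^B(w)|\le\min\big(2N,\,2/|w-z|\big)$ on $\mathbb{T}$ integrates directly to $N^{q-1}$ for every $q>1$; this is more elementary and covers all $q$ at once. Your proof of \eqref{eq:Bloch_norms} via Leibniz and the dichotomy $1-|w|\gtrless 1/N$ is also sound (the paper instead interpolates between the $\mathcal{B}_1$-seminorm and the sup-norm of the next derivative); both rest on \eqref{prop:Bernst_type_ineq}.

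The gap is in parts (3) and (4), exactly where you flag it: the $B'$- (and $B''$-) contribution on $F=\{|1-\bar zw|>1/N\}$. Your proposed remedy --- replace $|B'(w)|$ by $\sum_i(1-|\lambda_i|^2)|1-\overline{\lambda_i}w|^{-2}$ and reduce to two-point Forelli--Rudin integrals --- does not produce the claimed power. Test it on $\lambda_1=\dots=\lambda_n=r$, $\zeta=1$: the sum equals $n(1-r^2)|1-rw|^{-2}$, and writing $|B'|^q\le(CN)^{q-1}|B'|$ one gets $\int_F|B'|^q(1-|w|)^\gamma|1-\bar\zeta w|^{-q}{\rm d}\mathcal{A}\lesssim N^{q-1}\cdot n(1-r)\cdot(1-r)^{\gamma-q}=n^{q}(1-r)^{\gamma+2-2q}$, which exceeds the target $n^{2q-\gamma-2}(1-r)^{\gamma+2-2q}$ by the factor $n^{\gamma+2-q}$ whenever $\gamma>q-2$; distributing all $q$ powers over the sum via Minkowski gives the same loss. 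This is precisely the range that matters (for $\beta\in(-1,0]$ and $q<2$ the application needs every $\gamma\in[0,q-1)$, and $[0,q-1)\subset(q-2,q-1]$ there), and the loss is structural: dropping the factors $|B_{i-1}(w)|^2$ from the identity $1-|B(w)|^2=(1-|w|^2)\sum_i(1-|\lambda_i|^2)|1-\overline{\lambda_i}w|^{-2}|B_{i-1}(w)|^2$ destroys the decay of $B'$ away from $\mathbb{T}$, so the mass does \emph{not} localize at $|1-\bar zw|\sim1/N$. The paper's resolution is a pointwise interpolation between the two bounds you already possess: with $s=\gamma+1-\epsilon\in(0,q)$ write $|B'|^q(1-|w|)^\gamma=|B'|^{q-s}(1-|w|)^{\gamma-s}\big(|B'|(1-|w|)\big)^{s}$, bound the last factor by $1$ (Schwarz--Pick), the first by $N^{q-s}$, and apply \eqref{bas} to $(1-|w|)^{-1+\epsilon}|1-\bar\omega w|^{-q}$, giving $N^{q-s}\cdot N^{q-\gamma+s-2}=N^{2q-\gamma-2}$; the analogue with $|B''|(1-|w|)^2\lesssim1$ and $s=\frac{\gamma+1}{2}$ or $\frac{\gamma+1-\epsilon}{2}$ handles \eqref{eq:2nd_estimate_AntonLemma}. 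Finally, your Littlewood--Paley shortcut at the endpoint $\gamma=q-1$ only works for $q\ge2$: the inequality $\int_{\mathbb{D}}|f'|^q(1-|w|)^{q-1}{\rm d}\mathcal{A}\lesssim\Norm{f}{H^q}^q$ fails in general for $1<q<2$, where only the reverse direction of the Littlewood--Paley theorem holds.
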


\begin{proof}
Note that all above norms (in appropriate powers) are subharmonic
functions in $z$. Thus, it suffices to prove the inequalities only
in the case $z=\zeta\in\mathbb{T}$. So in what follows we assume
that $\zeta\in\mathbb{T}$. 
\medskip{}
\\
\textbf{Proof of \eqref{eq:Hpnorms}.} For $q=2$, 
\begin{eqnarray*}
\Norm{k_{\zeta}^{B}}{H^{2}}^{2} & = & \frac{1-\abs{B(\zeta)}^{2}}{1-\abs{\zeta}^{2}}=\sum_{j=1}^{n}\frac{1-\abs{\lambda_{j}}^{2}}{\abs{1-\overline{\lambda_{j}}\zeta}^{2}}\abs{B_{j-1}(\zeta)}^{2}\\
 & \leq & \sum_{j=1}^{n}\frac{1+\abs{\lambda_{j}}}{1-\abs{\lambda_{j}}}\leq\frac{1+r}{1-r}n.
\end{eqnarray*}
Here $B_{j}=\prod_{i=1}^{j}b_{\lambda_{i}}$, $B_{0}\equiv1$. The
estimate for $q=\infty$ follows from 
\[
\abs{k_{\zeta}^{B}(u)}\leq\Norm{k_{\zeta}^{B}}{H^{2}}\Norm{k_{u}^{B}}{H^{2}}\lesssim\frac{n}{1-r}
\]
for any $u,\,\zeta\in\overline{\mathbb{D}}$. If $q\in[2,\,\infty)$,
then we write 
\[
\Norm{k_{\zeta}^{B}}{H^{q}}^{q}\leq\Norm{k_{\zeta}^{B}}{H^{2}}^{2}\Norm{k_{\zeta}^{B}}{\infty}^{q-2}\lesssim\left(\frac{n}{1-r}\right)^{q-1}.
\]
Finally for $q\in(1,\,2)$ we apply the following result by W. Cohn
\cite[Lemma 4.2]{Coh}. Denoting by $p$ the conjugate exponent of
$q$ (i.e., $\frac{1}{q}+\frac{1}{p}=1$) 
\begin{eqnarray*}
\Norm{k_{\zeta}^{B}}{H^{q}} & = & \sup_{g\in H^{p},\:\Norm{g}{H^{p}}\leq1}\Abs{\int_{\mathbb{T}}g(z)\overline{k_{\zeta}^{B}(z)}{\rm d}m(z)}\\
 & = & \sup_{g\in H^{p},\:\Norm{g}{H^{p}}\leq1}\Abs{\int_{\mathbb{T}}P_{B}g(z)\overline{k_{\zeta}^{B}(z)}{\rm d}m(z)}\\
 & \lesssim & \sup_{h\in K_{B},\:\Norm{h}{H^{p}}\leq C_{p}}\abs{h(\zeta)},
\end{eqnarray*}
where the last inequality is due to the fact that $h=P_{B}g\in K_{B}$
and there exists $C_{p}>0$ 
such that 
$\Norm{P_{B}g}{H^{p}}\leq C_{p}\Norm{g}{H^{p}}$, $p\in(1,\,\infty)$.

Applying the inequality 
$\|h\|_\infty \le \big(\frac{n}{1-r}\big)^{1/p} \|h\|_{H^p}$, 
$h\in K_B$, which is a special case of \eqref{eq:BaZaNik}, we obtain
\eqref{eq:Hpnorms}. \medskip{}
 \\
 \textbf{Proof of \eqref{eq:Bloch_norms}.} Clearly, for $\alpha=1$,
\[
\sup_{u\in\mathbb{D}}(1-\vert u\vert)\big|(k_{\zeta}^{B})^{(l+1)}(u)\big|=\Norm{(k_{\zeta}^{B})^{(l)}}{\mathcal{B}_{1}}\leq\Norm{(k_{\zeta}^{B})^{(l)}}{\infty}\lesssim\left(\frac{n}{1-r}\right)^{l+1}
\]
by \eqref{prop:Bernst_type_ineq}. Therefore, for any $0\leq\alpha\leq1$,
\[
\Norm{(k_{\zeta}^{B})^{(l)}}{\mathcal{B}_{\alpha}}\leq\Norm{(k_{\zeta}^{B})^{(l)}}{\mathcal{B}_{1}}^{\alpha}\Norm{(k_{\zeta}^{B})^{(l+1)}}{\infty}^{1-\alpha}\lesssim\left(\frac{n}{1-r}\right)^{l+2-\alpha},
\]
which completes the proof. \medskip{}
 \\
 \textbf{Proof of \eqref{eq:1st_estimate_AntonLemma}. } For the derivative
of $k_{\zeta}^{B}$ we have 
\[
(k_{\zeta}^{B})'(z)=\zeta\overline{B(\zeta)}\frac{B(\zeta)-B(z)-(\zeta-z)B'(z)}{(\zeta-z)^{2}}.
\]
Then we can write $\Norm{(k_{\zeta}^{B})'}{A^{q}(\gamma)}^{q}=I_{1}+I_{2}$,
where 
\[
I_{1}=\int_{\abs{z-\zeta}\leq\frac{1-r}{n}}\Abs{\frac{B(\zeta)-B(z)-(\zeta-z)B'(z)}{(\zeta-z)^{2}}}^{q}(1-\vert z\vert)^{\gamma}{\rm d}\mathcal{A}(z)
\]
and 
\[
I_{2}=\int_{\abs{z-\zeta}>\frac{1-r}{n}}\Abs{\frac{B(\zeta)-B(z)-(\zeta-z)B'(z)}{(\zeta-z)^{2}}}^{q}(1-\vert z\vert)^{\gamma}{\rm d}\mathcal{A}(z).
\]
Since $|B''(u)| \lesssim\big(\frac{n}{1-r}\big)^{2}$ for any
$u\in\overline{\mathbb{D}}$, it follows that 
\begin{align*}
I_{1} & \lesssim\max_{u\in\overline{\mathbb{D}}}\abs{B''(u)}^{q}\int_{\abs{z-\zeta}\leq\frac{1-r}{n}}(1-\vert z\vert)^{\gamma}{\rm d}\mathcal{A}(z)\\
 & \lesssim\left(\frac{n}{1-r}\right)^{2q}\left(\frac{1-r}{n}\right)^{\gamma+2}\lesssim\left(\frac{n}{1-r}\right)^{2q-2-\gamma}.
\end{align*}

Now we estimate $I_{2}$. To this aim we first observe that if we
put $w=\big(1-\frac{1-r}{2n}\big) \zeta$, then $|z-\zeta|/2\le|1-\bar{w}z|\le3|z-\zeta|/2$
when $|z-\zeta|\ge(1-r)/n$. Hence, 
\[
\begin{aligned}\int_{\abs{z-\zeta}>\frac{1-r}{n}}
\Abs{\frac{B(\zeta)-B(z)}{(z-\zeta)^{2}}}^{q}(1-\vert z\vert)^{\gamma} & 
{\rm d}\mathcal{A}(z) \lesssim \int_{\mathbb{D}}\frac{(1-\vert z\vert)^{\gamma}}{\abs{1-\bar{w}z}^{2q}}{\rm d}\mathcal{A}(z)\\
 & \asymp\frac{1}{(1-|w|)^{2(q-1)-\gamma}}\asymp\left(\frac{n}{1-r}\right)^{2q-\gamma-2}.
\end{aligned}
\]
Here we use the standard fact (see, e.g., \cite[Theorem 1.7]{HKZ})
that for $\alpha>-1$ and $\beta>\alpha+2$ one has 
\begin{equation}
\int_{\mathbb{D}}\frac{(1-\vert z\vert)^{\alpha}}{\abs{1-\bar{w}z}^{\beta}}{\rm d}\mathcal{A}(z)\asymp\frac{1}{(1-|w|)^{\beta-\alpha-2}}\label{bas}
\end{equation}
with constants depending on $\alpha$ and $\beta$, but not on $w\in\mathbb{D}$.
Note that $\gamma\in(-1,\,q-1]$ and so in our case the assumptions
on exponents are satisfied.

It remains to estimate 
\[
\int_{\abs{z-\zeta}>\frac{1-r}{n}}\Abs{\frac{B'(z)}{\zeta-z}}^{q}(1-\vert z\vert)^{\gamma}{\rm d}\mathcal{A}(z)\asymp\int_{\mathbb{D}}\frac{\abs{B'(z)}^{q}}{\abs{1-\bar{w}z}^{q}}(1-\vert z\vert)^{\gamma}{\rm d}\mathcal{A}(z)=:J.
\]
Take $\epsilon>0$ sufficiently small so that $\epsilon<\min(q-1,\gamma+1)$,
and put $s=\gamma+1-\epsilon$. Then $s\in(0,q)$. Writing 
\[
\abs{B'(z)}^{q}(1-\vert z\vert)^{\gamma}=\abs{B'(z)}^{q-s}(1-\vert z\vert)^{\gamma-s}\left(\abs{B'(z)}(1-\abs{z})\right)^{s}
\]
and observing that $\sup_{u\in\mathbb{D}}\abs{B'(u)}(1-\abs{u})\leq1$,
we get 
\[
J\leq\int_{\mathbb{D}}\frac{\abs{B'(z)}^{q-s}(1-\vert z\vert)^{\gamma-s}}{\abs{1-\bar{w}z}^{q}}{\rm d}\mathcal{A}(z)\lesssim\left(\frac{n}{1-r}\right)^{q-s}\int_{\mathbb{D}}\frac{(1-\vert z\vert)^{\gamma-s}}{\abs{1-\bar{w}z}^{q}}{\rm d}\mathcal{A}(z)
\]
(here we use the inequality $\abs{B'(u)}\lesssim\frac{n}{1-r}$, $u\in\overline{\mathbb{D}}$).
Since $\gamma-s=-1+\epsilon$ and $q-\gamma+s-2=q-1-\epsilon>0$,
we get by \eqref{bas} 
\[
\int_{\mathbb{D}}\frac{(1-\vert z\vert)^{\gamma-s}}{\abs{1-\bar{w}z}^{q}}{\rm d}\mathcal{A}(z)\asymp\frac{1}{(1-|w|)^{q-\gamma+s-2}}\asymp\left(\frac{n}{1-r}\right)^{q-\gamma+s-2},
\]
which completes the proof of \eqref{eq:1st_estimate_AntonLemma}. \medskip{}
 \\
 \textbf{Proof of \eqref{eq:2nd_estimate_AntonLemma}.} Note that
\[
\abs{(k_{\zeta}^{B})''(z)}=2\Abs{\frac{B(\zeta)-B(z)-(\zeta-z)B'(z)-\frac{(\zeta-z)^{2}}{2}B''(z)}{(\zeta-z)^{3}}},
\]
whence $\abs{(k_{\zeta}^{B})''(z)}\le\sup_{u\in\mathbb{D}}|B'''(u)|/3$,
$z\in\mathbb{D}$. Since $\abs{B'''(u)}\lesssim\big(\frac{n}{1-r}\big)^{3}$
for any $u\in\overline{\mathbb{D}}$, it follows that 
\[
\Abs{(k_{\zeta}^{B})''(u)}\lesssim\sup_{u\in\mathbb{D}}\abs{B'''(u)}\lesssim\left(\frac{n}{1-r}\right)^{3}.
\]
Therefore 
\begin{align*}
\int_{\abs{z-\zeta}\leq\frac{1-r}{n}}\Abs{(k_{\zeta}^{B})''(z)}^{q}(1-\vert z\vert)^{\gamma}{\rm d}\mathcal{A}(z) & \lesssim\left(\frac{n}{1-r}\right)^{3q}\int_{\abs{z-\zeta}\leq\frac{1-r}{n}}(1-\vert z\vert)^{\gamma}{\rm d}\mathcal{A}(z)\\
 & \lesssim\left(\frac{n}{1-r}\right)^{3q}\left(\frac{1-r}{n}\right)^{\gamma+2}.
\end{align*}

It remains to estimate 
\[
J_{1}:=\int_{\abs{z-\zeta}>\frac{1-r}{n}}\frac{(1-\vert z\vert)^{\gamma}}{\abs{\zeta-z}^{3q}}{\rm d}\mathcal{A}(z),\qquad J_{2}:=\int_{\abs{z-\zeta}>\frac{1-r}{n}}\frac{\abs{B'(z)}^{q}}{\abs{\zeta-z}^{2q}}(1-\vert z\vert)^{\gamma}{\rm d}\mathcal{A}(z)
\]
and 
\[
J_{3}:=\int_{\abs{z-\zeta}>\frac{1-r}{n}}\frac{\abs{B''(z)}^{q}}{\abs{\zeta-z}^{q}}(1-\vert z\vert)^{\gamma}{\rm d}\mathcal{A}(z).
\]
The estimate $J_{1}\asymp\big(\frac{n}{1-r}\big)^{3q-2-\gamma}$
follows immediately from \eqref{bas}. Now let $s=\frac{\gamma+1}{2}$.
Then $s\in[0,\,q]$ and we have 
\[
J_{2}\lesssim\sup_{u\in\mathbb{D}}\abs{B'(u)}^{q-s}\cdot\sup_{u\in\mathbb{D}}\left((1-\abs{u})^{s}\abs{B'(u)}^{s}\right)\cdot\int_{\abs{z-\zeta}>\frac{1-r}{n}}\frac{(1-\vert z\vert)^{\gamma-s}}{\abs{\zeta-z}^{2q}}{\rm d}\mathcal{A}(z).
\]
As in the proof of \eqref{eq:1st_estimate_AntonLemma}, let $w=\zeta\left(1-\frac{1-r}{2n}\right)$.
Then, by \eqref{bas}, 
\[
J_{2}\lesssim\left(\frac{n}{1-r}\right)^{q-s}\int_{\mathbb{D}}\frac{(1-\vert z\vert)^{\gamma-s}}{\abs{1-\bar{w}z}^{2q}}{\rm d}\mathcal{A}(z)\asymp\left(\frac{n}{1-r}\right)^{q-s}\frac{1}{(1-|w|)^{2q-\gamma+s-2}}.
\]
We used the fact that $2q-\gamma+s-2=2q-\frac{3}{2}-\frac{\gamma}{2}>0$
since $q\ge\gamma$ and $q>1$.

To estimate $J_{3}$ note that $\abs{B''(u)}\leq2\left(1-\abs{u}\right)^{-2}$,
$u\in\mathbb{D}$. Let $\epsilon\in(0,\min(q-1,\gamma+1))$ and put
$s=\frac{\gamma+1-\epsilon}{2}.$ Then $s\in(0,q)$, $\gamma-2s=-1+\epsilon$
and $q-\gamma+2s-2=q-1-\epsilon>0$. Now 
\begin{align*}
J_{3} & \leq\sup_{u\in\mathbb{D}}\abs{B''(u)}^{q-s}\cdot\sup_{u\in\mathbb{D}}\left((1-\abs{u})^{2s}\abs{B''(u)}^{s}\right)\cdot\int_{\abs{z-\zeta}>\frac{1-r}{n}}\frac{(1-\vert z\vert)^{\gamma-2s}}{\abs{1-\bar{w}z}^{q}}{\rm d}\mathcal{A}(z)\\
 & \asymp\left(\frac{n}{1-r}\right)^{2q-2s}\left(\frac{1}{1-\abs{w}}\right)^{q-\gamma+2s-2}\asymp\left(\frac{n}{1-r}\right)^{3q-\gamma-2}.
\end{align*}
This completes the proof of \eqref{eq:2nd_estimate_AntonLemma}. 
\end{proof}
\begin{cor}
\label{cor:Anton_Cor}Let $l\in\mathbb{N},$ $l\geq2$, $q\in(1,\infty)$
and $\gamma\in(-1,\,q]$. We have 
\[
\Norm{(k_{\zeta}^{B})^{(l)}}{A^{q}(\gamma)}\lesssim\left(\frac{n}{1-r}\right)^{l+1-\frac{\gamma+2}{q}}.
\]
\end{cor}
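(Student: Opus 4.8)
The plan is to reproduce, for a general order $l\ge2$, the same two-region scheme already used to prove \eqref{eq:2nd_estimate_AntonLemma}. By the subharmonicity argument from the beginning of the proof of Proposition~\ref{lem:rep_kern_norms_est} (now $\zeta\mapsto\Norm{(k_\zeta^B)^{(l)}}{A^q(\gamma)}^q$ is subharmonic on $\overline{\mathbb{D}}$), I would first reduce to $\zeta\in\mathbb{T}$. Starting from $k_\zeta^B(z)=\zeta\overline{B(\zeta)}\,\frac{B(\zeta)-B(z)}{\zeta-z}$ and differentiating $l$ times by the Leibniz rule, one obtains
\[
(k_\zeta^B)^{(l)}(z)=\zeta\overline{B(\zeta)}\left[\frac{l!\,(B(\zeta)-B(z))}{(\zeta-z)^{l+1}}+\sum_{j=1}^{l}c_{l,j}\,\frac{B^{(j)}(z)}{(\zeta-z)^{l-j+1}}\right]
\]
with explicit constants $c_{l,j}$, and I would split $\Norm{(k_\zeta^B)^{(l)}}{A^q(\gamma)}^q$ into the integral over $\Abs{z-\zeta}\le\frac{1-r}{n}$ and the integral over its complement.

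On the near region I would avoid the explicit formula entirely: since $k_\zeta^B\in K_B$, the Bernstein-type inequality \eqref{prop:Bernst_type_ineq} together with $\Norm{k_\zeta^B}{\infty}\lesssim\frac{n}{1-r}$ (the case $q=\infty$ of \eqref{eq:Hpnorms}) gives $\Norm{(k_\zeta^B)^{(l)}}{\infty}\lesssim\big(\frac{n}{1-r}\big)^{l+1}$. Combined with the elementary bound $\int_{\Abs{z-\zeta}\le(1-r)/n}(1-\abs{z})^\gamma\,{\rm d}\mathcal{A}(z)\lesssim\big(\frac{1-r}{n}\big)^{\gamma+2}$ already used in the proof of \eqref{eq:2nd_estimate_AntonLemma}, this produces the contribution $\big(\frac{n}{1-r}\big)^{(l+1)q-\gamma-2}$, which is exactly the right size after taking the $q$-th root.

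The far region is where the real work lies, and it is treated term by term just as $J_1,J_2,J_3$ were in the proof of \eqref{eq:2nd_estimate_AntonLemma}. Putting $w=\zeta\big(1-\frac{1-r}{2n}\big)$ so that $\Abs{\zeta-z}\asymp\Abs{1-\bar w z}$ on $\{\Abs{z-\zeta}>\frac{1-r}{n}\}$, the first term is controlled by $\Abs{B(\zeta)-B(z)}\le2$ and the basic integral \eqref{bas}. For each remaining term I would invoke the two bounds $\Abs{B^{(j)}(u)}\lesssim\big(\frac{n}{1-r}\big)^{j}$ (which follows from \eqref{prop:Bernst_type_ineq} applied to $B\in K_{zB}$, exactly as $\Abs{B''}$ and $\Abs{B'''}$ were estimated before) and $(1-\abs{u})^{j}\Abs{B^{(j)}(u)}\lesssim1$ (a Cauchy estimate, valid since $\Norm{B}{\infty}=1$), and split $\Abs{B^{(j)}(z)}^{q}=\Abs{B^{(j)}(z)}^{q-s}\big((1-\abs{z})^{j}\Abs{B^{(j)}(z)}\big)^{s}(1-\abs{z})^{-js}$ before applying \eqref{bas}.

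The main obstacle, as in the proof of \eqref{eq:2nd_estimate_AntonLemma}, is the choice of the exponent $s=s_j\in(0,q)$ for each $j$: it must keep the weight $(1-\abs{z})^{\gamma-js}$ above $-1$ while keeping the pole exponent $(l-j+1)q$ strictly above $(\gamma-js)+2$, so that \eqref{bas} applies. A short computation shows that any $s_j$ in $\big(\frac{\gamma+2-(l-j+1)q}{j},\frac{\gamma+1}{j}\big)\cap(0,q)$ works, and that this set is nonempty precisely because $l\ge2$, $q>1$ and $\gamma\le q$ force $\gamma+2<(l+1)q$; this is exactly where the hypothesis $l\ge2$ (together with the wider range $\gamma\in(-1,q]$, as opposed to $(-1,q-1]$ in \eqref{eq:1st_estimate_AntonLemma}) is used. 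With such a choice every far-region term yields the exponent $(l+1)q-\gamma-2$, independently of $j$ and $s_j$, so that summing the finitely many ($l$-dependent) contributions and taking $q$-th roots gives the claimed bound $\big(\frac{n}{1-r}\big)^{l+1-\frac{\gamma+2}{q}}$.
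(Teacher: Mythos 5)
Your argument is correct, but it takes a genuinely different route from the paper. The paper's proof of Corollary~\ref{cor:Anton_Cor} is a two-line reduction: it invokes the Bernstein-type inequality for rational functions in weighted Bergman spaces from \cite[Theorem 1.3]{BaZa2}, which gives $\Norm{(k_{\zeta}^{B})^{(l)}}{A^{q}(\gamma)}\lesssim\big(\frac{n}{1-r}\big)^{l-2}\Norm{(k_{\zeta}^{B})''}{A^{q}(\gamma)}$, and then applies the already-established estimate \eqref{eq:2nd_estimate_AntonLemma}. You instead rerun the entire two-region scheme of the proof of \eqref{eq:2nd_estimate_AntonLemma} for a general order $l$, and all the pieces check out: the Leibniz expansion of $(k_\zeta^B)^{(l)}$ is right (with $c_{l,j}=-l!/j!$); the near-region bound via \eqref{prop:Bernst_type_ineq} applied to $k_\zeta^B\in K_B$ gives $(\frac{n}{1-r})^{(l+1)q-\gamma-2}$; and in the far region the exponent bookkeeping $j(q-s_j)+(l-j+1)q-\gamma+js_j-2=(l+1)q-\gamma-2$ is independent of $s_j$, while the admissibility window for $s_j$ is nonempty exactly under the conditions you identify ($(l-j+1)q>1$, $\gamma+2<(l+1)q$, both guaranteed by $l\ge2$, $q>1$, $\gamma\le q$). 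What the paper's approach buys is brevity, at the cost of importing an external Bernstein inequality in Bergman norms; what yours buys is self-containment within the techniques already on display in Proposition~\ref{lem:rep_kern_norms_est}, and it makes visible precisely where the hypotheses $l\ge2$ and $\gamma\in(-1,q]$ enter, at the cost of a longer case-by-case verification. Either proof is acceptable.
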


\begin{proof}
An application of \cite[Theorem 1.3]{BaZa2} yields 
\[
\Norm{(k_{\zeta}^{B})^{(l)}}{A^{q}(\gamma)}\lesssim\left(\frac{n}{1-r}\right)^{l-2}\Norm{(k_{\zeta}^{B})^{''}}{A^{q}(\gamma)}.
\]
Now the result follows from inequality \eqref{eq:2nd_estimate_AntonLemma}. 
\end{proof}
\bigskip{}


\section{\label{sec:Proofs} Proofs of the upper bounds in Theorems \ref{thm:Interp}
and \ref{thm:Embedd}}

\subsection{The upper bounds in (\ref{eq:strength_hardy}): a direct proof}

We start by giving an easier proof than the one in \cite[Theorem 2.3]{Zar2}
of the upper bound in \eqref{eq:strength_hardy} for the case $1\leq p\leq+\infty$.
The main drawback of the proof in \cite{Zar2} is that it makes use
of a strong interpolation result between Hardy spaces by P. Jones
\cite{Jon}. The proof below is a two-line corollary of $H^{p}$-norms
estimates of reproducing kernel of model spaces. 
\begin{proof}
For any $f\in H^{p}$, 
\[
\vert P_{B}f(\zeta)\vert=\abs{\left\langle f,\,k_{\zeta}^{B}\right\rangle }\leq\Norm{f}{H^{p}}\Norm{k_{\zeta}^{B}}{H^{q}},\qquad\zeta\in\overline{\mathbb{D}},
\]
where $\frac{1}{p}+\frac{1}{q}=1.$ Taking the supremum over all $\zeta\in\overline{\mathbb{D}}$,
we obtain from \eqref{eq:Hpnorms} that 
\[
c\left(\sigma,\,H^{p},\,H^{\infty}\right)\lesssim\left(\frac{n}{1-r}\right)^{\frac{1}{p}},
\]
for any $\sigma=(\lambda_{1},\dots,\lambda_{n})\in\mathbb{D}^{n}.$ 
\end{proof}

\subsection{The upper bound in \eqref{eq:strength_bergm}}

In the following proof, given $\sigma\in\mathbb{D}^{n}$ we will assume
first that $f\in H^{\infty}$ and bound $\Norm{f}{H^{\infty}/B_{\sigma}H^{\infty}}$
in terms of $\Norm{f}{A^{p}\left(\beta\right)}$. The corresponding
upper bound for $c(\sigma,\,A^{p}\left(\beta\right),\,H^{\infty})$
will follow by density. 
\begin{proof}
\textbf{Case 1: $\beta\leq0$.} First we prove the upper bound for
$\beta\in(-1,0]$. Let $f\in A^{p}\left(\beta\right)\cap H^{\infty}$
be such that $\Norm{f}{A^{p}\left(\beta\right)}\leq1$. Let 
\[
g(\zeta)=(P_{B}f)(\zeta)=\langle f,\,k_{\zeta}^{B}\rangle.
\]
Applying the Green formula \eqref{gr} to $\phi=k_{\zeta}^{B}$ and
$\psi=f$ we obtain 
\begin{equation}
\overline{g(\zeta)}-\overline{f(0)}k_{\zeta}^{B}(0)=((k_{\zeta}^{B})',\,S^{*}f)=\int_{\mathbb{D}}\big(k_{\zeta}^{B})'(u)\overline{S^{*}f(u)}{\rm d}\mathcal{A}(u).\label{eq:green}
\end{equation}
We first assume that $p>1$ so that its conjugate exponent $q$ (i.e., $\frac{1}{p}+\frac{1}{q}=1$) is finite. Applying the H\"older inequality to $(1-\vert u\vert)^{\beta/p}\abs{S^{*}f(u)}$ and  $(1-\vert u\vert)^{-\beta/p}\Abs{(k_{\zeta}^{B})'(u)}$ with exponents $p$ and $q$, we obtain 
\[
\Abs{((k_{\zeta}^{B})',\,S^{*}f)}\leq\Norm{S^{*}f}{A^{p}\left(\beta\right)}\Norm{(k_{\zeta}^{B})'}{A^{q}(-(q-1)\beta)},
\]
and the estimate for $\|g\|_{\infty}$ follows by a direct application
of \eqref{eq:1st_estimate_AntonLemma} with $\gamma=-(q-1)\beta\in[0,q-1)$
(note that $S^{*}$ is bounded from $A^{p}(\beta)$ onto itself and
$\vert k_{\zeta}^{B}(0)\vert\leq2$).

If $p=1$ then 
\[
\vert((k_{\zeta}^{B})',\,S^{*}f)\vert\lesssim\Norm{f}{A^{1}\left(\beta\right)}\Norm{k_{\zeta}^{B}}{\mathcal{B}_{-\beta}}\lesssim\Norm{f}{A^{1}\left(\beta\right)}\left(\frac{n}{1-r}\right)^{\beta+2}
\]
by \eqref{eq:Bloch_norms} with $l=0$ and $\alpha=-\beta$. \medskip{}
\\
 \textbf{Case 2: $\beta>0$.} Now we prove the upper bound for $\beta>0$.
Applying \eqref{eq:green} and \eqref{gr2} with $l=\big[\frac{\beta}{p}\big]+1$
we get 
\begin{equation}
\big((k_{\zeta}^{B})',\,S^{*}f\big)=\int_{\mathbb{D}}D_{l}\left((k_{\zeta}^{B})'\right)(u)\overline{S^{*}f(u)}(1-\vert u\vert^{2})^{l}{\rm d}\mathcal{A}(u).\label{eq:green_modified}
\end{equation}
Again we first assume that $p>1$ so that its conjugate exponent $q$
is finite. Writing $(1-\vert u\vert^{2})^{l}=(1-\vert u\vert^{2})^{\frac{\beta}{p}+l-\frac{\beta}{p}}$
and applying the H\"older inequality to $(1-\vert u\vert^{2})^{\frac{\beta}{p}}\overline{S^{*}f(u)}$
and $(1-\vert u\vert^{2})^{l-\frac{\beta}{p}}D_{l}\left(\big(k_{\zeta}^{B}\big)'\right)(u)$
we get 
\[
\vert((k_{\zeta}^{B})',\,S^{*}f)\vert\leq\Norm{S^{*}f}{A^{p}\left(\beta\right)}\left(\int_{\mathbb{D}}(1-\vert u\vert)^{q\alpha}\vert D_{l}\left((k_{\zeta}^{B})'\right)(u)\vert^{q}{\rm d}\mathcal{A}(u)\right)^{\frac{1}{q}},
\]
where $\alpha=l-\frac{\beta}{p}$. It remains to apply Corollary \ref{cor:Anton_Cor}
with $\gamma=q\alpha\in[0,q]$: the result follows since 
\[
\Norm{D_{l}\left((k_{\zeta}^{B})'\right)}{A^{q}(q\alpha)}\asymp\Norm{(k_{\zeta}^{B})^{(l+1)}}{A^{q}(q\alpha)}.
\]

If $p=1$ then, by \eqref{eq:Bloch_norms}, 
\[
\vert((k_{\zeta}^{B})',\,S^{*}f)\vert\lesssim\Norm{f}{A^{1}\left(\beta\right)}\Norm{(k_{\zeta}^{B})^{(l)}}{\mathcal{B}_{\alpha}}\lesssim\Norm{f}{A^{1}\left(\beta\right)}\left(\frac{n}{1-r}\right)^{\beta+2}.
\]
\end{proof}

\subsection{The upper bound in \eqref{eq:embedd_bergman}}
\begin{proof}
The upper bound in Theorem \ref{thm:Embedd} follows directly from
the following observation: let $f\in\mathcal{R}_{n}$ and $1/\overline{\lambda}_{1},\dots,1/\overline{\lambda}_{n}$
are the poles of $f$ (repeated according to multiplicities), then
$f\in K_{zB}$ with $\sigma=(\lambda_{1},\dots,\lambda_{n})$. In
particular we have $f=P_{\tilde{B}}f=\left\langle f,\,k_{\zeta}^{\tilde{B}}\right\rangle $,
where $\tilde{B}(z)=zB(z)$. Now we can repeat the above proof for
$\tilde{B}$ instead of $B$. 
\end{proof}
\bigskip{}


\section{\label{sec:Proof-of-the}Proof of the lower bounds }

In this section we estimate from below the interpolation constant
$c(\sigma,\;X,\;H^{\infty})$ for the one-point interpolation sequence
$\sigma_{\lambda,\,n}=(\lambda,\lambda,...,\lambda)\in\mathbb{D}^{n}$:
\[
c(\sigma_{n,\,\lambda},\;X,\;H^{\infty})=\sup\{\Norm{f}{H^{\infty}/b_{\lambda}^{n}H^{\infty}}:\,f\in X\cap H^{\infty},\,\Norm{f}{X}\leq1\},
\]
where $\Norm{f}{H^{\infty}/b_{\lambda}^{n}H^{\infty}}=\mbox{inf}\{\Norm{f+b_{\lambda}^{n}g}{\infty}:\:g\in X\cap H^{\infty}\}$
(recall that $b_{\lambda}(z)=\frac{\lambda-z}{1-\overline{\lambda}z}$).
Since the spaces $X=H^{p},\,A^{p}(\beta)$ and $H^{\infty}$ are rotation
invariant we have $c\left(\sigma_{n,\,\lambda},X,\,H^{\infty}\right)=c\left(\sigma_{n,\,\mu},X,\,H^{\infty}\right)$
for every $\lambda,\,\mu$ with $\vert\lambda\vert=\vert\mu\vert=r$.
Without loss of generality we can thus suppose that $\lambda=-r$.


\subsection{The lower bounds in Theorem \ref{thm:Interp}}

Recall that we need to prove the following estimates: 
\begin{equation}
c\left(\sigma_{n,\,-r},\,H^{p},H^{\infty}\right)\gtrsim\left(\frac{n}{1-r}\right)^{\frac{1}{p}}\label{eq:lower_bd_strength_hardy}
\end{equation}
and 
\begin{equation}
c\left(\sigma_{n,\,-r},\,A^{p}(\beta),\,H^{\infty}\right)\gtrsim\bigg(\frac{n}{1-r}\bigg)^{\frac{2+\beta}{p}}\label{eq:lower_bd_strength_bergman}
\end{equation}
for any $n\geq1$, $r\in[0,\,1)$, $p\in[1,\,+\infty)$ and $\beta>-1$.
\medskip{}

\begin{proof}
For $N\in\mathbb{N}$, we consider the test function 
\begin{equation}
\phi_{n}:=Q_{n}^{N},\label{eq:test_func_Hp-1}
\end{equation}
where 
\[
Q_{n}=\frac{1-r^{2}}{(1+rz)^{2}}\bigg(\sum_{k=0}^{n-1}b_{-r}^{k}(z)\bigg)^{2}=\frac{1-r^{2}}{(1+rz)^{2}}D_{n}^{2}(b_{-r}(z))
\]
and $D_{n}(z)=\sum_{j=0}^{n-1}z^{j}$ is the (analytic part of) the
$n^{\rm{th}}$ Dirichlet kernel. We have 
\[
c\left(\sigma_{n,\,-r},\,X,H^{\infty}\right)\geq\frac{\Norm{\phi_{n}}{H^{\infty}/b_{-r}^{n}H^{\infty}}}{\Norm{\phi_{n}}{X}}.
\]
Thus we need to obtain an upper estimate for $\Norm{\phi_{n}}{X}$
and a lower one for $\Norm{\phi_{n}}{H^{\infty}/b_{-r}^{n}H^{\infty}}.$
\medskip{}
\\
 \textbf{Step 1. Upper estimate for $\Norm{\phi_{n}}{H^{p}}$, $N=1$.}
Note that $Q_{n}=(\sum_{k=0}^{n-1}e_{k})^{2}$, where $e_{k}$ are
the elements of the Malmquist--Walsh basis. Hence, $\Norm{Q_{n}}{H^{1}}=\,n$.
Now we compute $\Norm{Q_{n}}{\infty}.$ Note that $Q_{n}\circ b_{-r}$
is a polynomial of degree $2n-2$ with positive coefficients: indeed,
\[
Q_{n}\circ b_{-r}=\left(\sum_{k=0}^{n-1}z^{k}\frac{(1-r^{2})^{1/2}}{1+rb_{-r}(z)}\right)^{2}=\left(1-r^{2}\right)^{-1}\left(1+(1+r)\sum_{k=1}^{n-1}z^{k}+rz^{n}\right)^{2}.
\]
In particular, 
\begin{equation}
\Norm{Q_{n}}{\infty}=\Norm{Q_{n}\circ b_{-r}}{\infty}=Q_{n}\circ b_{-r}(1)=n^{2}\frac{1+r}{1-r},\label{eq:Q_n_Infty_norm}
\end{equation}
and for any $p\geq1$, $\Norm{Q_{n}}{H^{p}}^{p}\leq\Norm{Q_{n}}{H^{1}}\Norm{Q_{n}}{\infty}^{p-1}$.
Thus 
\begin{equation}
\Norm{Q_{n}}{H^{p}}\leq n^{2-\frac{1}{p}}\left(\frac{1+r}{1-r}\right)^{1-\frac{1}{p}}.\label{bro1}
\end{equation}
\medskip{}
 \textbf{Step 2. Upper estimate for $\Norm{\phi_{n}}{A^{p}(\beta)}$}.
Assume that $\beta\in(l-1,\,l]$ where $l\geq0$ is an integer and
put $N=l+2$. We will prove that 
\begin{equation}
\Norm{\phi_{n}}{A^{p}(\beta)}\lesssim\frac{n^{2N-\frac{\beta+2}{p}}}{(1-r)^{N-\frac{\beta+2}{p}}}.\label{bro2}
\end{equation}
The change of variable $w=b_{-r}(z)$ (equivalently, $z=b_{-r}(w)$)
gives 
\[
\int_{\mathbb{D}}f(b_{-r}(z))\abs{b_{-r}'(z)}^{2}{\rm d}\mathcal{A}(z)=\int_{\mathbb{D}}f(w){\rm d}\mathcal{A}(w)
\]
for any function $f$ summable with respect to $\mathcal{A}$. Then
we have 
\begin{eqnarray*}
\Norm{Q_{n}^{N}}{A^{p}(\beta)}^{p} & = & \frac{1}{(1-r^{2})^{pN-2-\beta}}\int_{\mathbb{D}}\abs{1+rw}^{2pN-4-2\beta}\abs{D_{n}(w)}^{2Np}(1-\abs{w}^{2})^{\beta}{\rm d}\mathcal{A}(w)\\
 & \lesssim & \frac{1}{(1-r)^{pN-2-\beta}}\int_{\mathbb{D}}\abs{D_{n}(w)}^{2Np}(1-\abs{w}^{2})^{\beta}{\rm d}\mathcal{A}(w)
\end{eqnarray*}
since $pN\geq N\geq\beta+2$ and so $2pN-4-2\beta\geq0$. It remains
to see that 
\[
\int_{\mathbb{D}}\abs{D_{n}(w)}^{2Np}(1-\abs{w}^{2})^{\beta}{\rm d}\mathcal{A}(w)\lesssim n^{2pN-2-\beta}.
\]
Indeed, for $p=1$ we have by a very rough estimate 
\[
\Norm{D_{n}^{N}}{A^{2}(\beta)}^{2}=\sum_{k=0}^{(n-1)N}\frac{\widehat{|D_{n}^{N}}(k)|^{2}}{k^{1+\beta}}\lesssim\sum_{k=1}^{(n-1)N}\frac{k^{2N-2}}{k^{1+\beta}}\lesssim n^{2N-2\beta-2},
\]
while for $p\in[1,\,\infty)$, 
\begin{eqnarray*}
\int_{\mathbb{D}}\abs{D_{n}(w)}^{2Np}(1-\abs{w}^{2})^{\beta}{\rm d}\mathcal{A}(w) & \leq & \Norm{D_{n}^{2N}}{\infty}^{p-1}\Norm{D_{n}^{N}}{A^{2}(\beta)}^{2}\\
 & \lesssim & n^{2N(p-1)}n^{2N-2\beta-2}=n^{2pN-2-\beta}.
\end{eqnarray*}
This completes the proof of \eqref{bro2}. \medskip{}
 \\
 \textbf{Step 3. Lower estimate for $\Norm{\phi_{n}}{H^{\infty}/b_{-r}^{n}H^{\infty}}$.}
Put $\Psi_{n}:=\,\phi_{n}\circ b_{-r}$. Clearly, 
\[
\Norm{\phi_{n}}{H^{\infty}/b_{-r}^{n}H^{\infty}}=\Norm{\Psi_{n}}{H^{\infty}/z^{n}H^{\infty}}.
\]
We will show that 
\begin{equation}
\Norm{\Psi_{n}}{H^{\infty}/z^{n}H^{\infty}}\gtrsim\frac{n^{2N}}{(1-r)^{N}}.\label{bab}
\end{equation}

\noindent Denote by $F_{n}$ the $n$-th Fejer kernel, $F_{n}(z)=\frac{1}{2\pi}\sum_{|j|\le n}\Big(1-\frac{|j|}{n}\Big)z^{j}$,
and denote by $*$ the usual convolution operation in $L^{1}(\mathbb{T})$.
Then, for any $g\in L^{\infty}\left(\mathbb{T}\right)$, we have $\Norm{g*F_{n}}{\infty}\le\Norm{g}{\infty}\Norm{F_{n}}{H^{1}}=\Norm{g}{\infty}$.
On the other hand, since $\widehat{g*h}(j)=\hat{g}(j)\hat{h}(j)$
and $\widehat{F}_{n}(j)=0$ for every $j\ge n$, we have 
\[
g*F_{n}=\Psi_{n}*F_{n}
\]
for any $g\in H^{\infty}$ such that $\hat{g}(k)=\widehat{\Psi}_{n}(k)$,
$k=0,1,,\dots,n-1$. Hence, for any such $g$, $\Norm{g}{\infty}\ge\Norm{\Psi_{n}*F_{n}}{\infty}$
and so
\[
\begin{aligned}\Norm{\Psi_{n}}{H^{\infty}/z^{n}H^{\infty}}=\inf\big\{\|g\|_{\infty}:\:g\in H^{\infty},\:\hat{g}(k) & =\hat{\Psi}_{n}(k),\:0\le k\le n-1\big\}\\
 & \geq\Norm{\Psi_{n}*F_{n}}{\infty}\ge(\Psi_{n}*F_{n})(1).
\end{aligned}
\]
Note that the convolution with $F_{n}$ gives us the Ces\`aro mean of
the partial sums of the Fourier series. Denote by $S_{j}$ the $j$-th
partial sum for $\Psi_{n}$ at 1. Recall that 
\[
\Psi_{n}(z)=\frac{1}{{(1-r^{2})^{N}}}\bigg(1+(1+r)\sum_{k=1}^{n-1}z^{k}+rz^{n}\bigg)^{2N}.
\]
Since all Taylor coefficients for $\Psi_{n}$ are positive, we have
\[
\begin{aligned}S_{j}(1) & \ge\frac{1}{{(1-r^{2})^{N}}}\bigg(1+(1+r)\sum_{k=1}^{[(2N)^{-1}j]}z^{k}\bigg)^{2N}\bigg|_{z=1}\\
 & \ge\frac{(1+(1+r)[(2N)^{-1}j])^{2N}}{(1-r^{2})^{N}}\gtrsim\frac{j^{2N}}{(1-r)^{N}}
\end{aligned}
\]
with the constants depending on $N$ only. Hence, 
\[
(\Psi_{n}*F_{n})(1)=\frac{1}{n}\sum_{j=0}^{n-1}S_{j}(1)\gtrsim\frac{n^{2N}}{(1-r)^{N}},
\]
which proves \eqref{bab}. \medskip{}
 \\
 \textbf{Step 4. Completion of the proof.} The estimate \eqref{eq:lower_bd_strength_hardy}
follows from \eqref{bro1} and \eqref{bab} (with $N=1$ and $\phi_{n}=Q_{n}$).
Combining \eqref{bro2} and \eqref{bab} we arrive at the estimate
\eqref{eq:lower_bd_strength_bergman}. 
\end{proof}
\medskip{}

\subsection{The lower bounds in Theorem \ref{thm:Embedd}}
\begin{proof}
We prove the lower bound for $\mathcal{E}_{n,\,r}(H^{\infty},\,A^{p}(\beta))$
in \eqref{eq:embedd_bergman}. We put $N=l+2$, where $l\geq0$ is
the integer such that $\beta\in(l-1,\,l]$, and consider the test
function $\phi_{m}$ defined in \eqref{eq:test_func_Hp-1} with $m=\big[\frac{n}{2N}\big]$
(assuming that $n>2N$). Therefore 
\[
\phi_{m}=\bigg(\sum_{k=0}^{m-1}(1-\vert r\vert^{2})^{1/2}b_{-r}^{k}\left(1+rz\right)^{-1}\bigg)^{2N}\in\mathcal{R}_{n,\,r}.
\]
We know from \eqref{eq:Q_n_Infty_norm} that 
\[
\Norm{Q_{m}^{N}}{\infty}=m^{2N}\left(\frac{1+r}{1-r}\right)^{N},
\]
and it follows from \eqref{bro2} that 
\[
\Norm{Q_{m}^{N}}{A^{p}(\beta)}\lesssim\frac{m^{2N-\frac{\beta+2}{p}}}{(1-r)^{N-\frac{\beta+2}{p}}}\lesssim\frac{m^{-\frac{\beta+2}{p}}}{(1-r)^{-\frac{\beta+2}{p}}}\Norm{Q_{m}^{N}}{\infty}
\]
which completes the proof. 
\end{proof}

\end{document}